\documentclass[final,3p,times]{elsarticle}

\usepackage{bbm}
\usepackage[T1]{fontenc}
\usepackage[utf8]{inputenc}
\usepackage{lmodern}
\usepackage{microtype}
\usepackage{amsmath,amssymb,mathtools,amsthm}
\usepackage{enumitem}
\usepackage{xcolor}
\usepackage[colorlinks=true,linkcolor=blue!45!black,citecolor=blue!45!black,urlcolor=blue!45!black]{hyperref}

\biboptions{sort&compress}

\newtheorem{theorem}{Theorem}[section]
\newtheorem{proposition}[theorem]{Proposition}
\newtheorem{lemma}[theorem]{Lemma}
\newtheorem{corollary}[theorem]{Corollary}
\theoremstyle{definition}
\newtheorem{definition}[theorem]{Definition}

\newcommand{\R}{\mathbb R}
\newcommand{\N}{\mathbb N}
\newcommand{\id}{\operatorname{id}}
\newcommand{\Int}{\operatorname{int}}
\newcommand{\cone}{\operatorname{cone}}
\newcommand{\conv}{\operatorname{conv}}
\newcommand{\supp}{\operatorname{supp}}
\newcommand{\End}{\operatorname{End}}
\newcommand{\Disc}{\mathsf{Disk}}
\newcommand{\RC}{\mathcal R_C}
\newcommand{\dd}{\,d}
\newcommand{\one}{\mathbf 1}
\newcommand{\tr}{\operatorname{tr}}
\newcommand{\diag}{\operatorname{diag}}

\newcommand{\graph}{\operatorname{graph}}

\newcommand{\bx}{\mathbf{x}}
\newcommand{\by}{\mathbf{y}}
\newcommand{\bz}{\mathbf{z}}
\newcommand{\bw}{\mathbf{w}}
\newcommand{\bu}{\mathbf{u}}
\newcommand{\bv}{\mathbf{v}}
\newcommand{\bh}{\mathbf{h}}
\newcommand{\be}{\mathbf{e}}
\newcommand{\bp}{\mathbf{p}}
\newcommand{\bq}{\mathbf{q}}
\newcommand{\ba}{\mathbf{a}}
\newcommand{\bb}{\mathbf{b}}

\newcommand{\bg}{\mathbf{g}}
\newcommand{\bs}{\mathbf{s}}
\newcommand{\blambda}{\boldsymbol{\lambda}}
\newcommand{\bmu}{\boldsymbol{\mu}}

\journal{Preprint}

\begin{document}

\begin{frontmatter}

\title{FS-domains are not always RB-domains\tnoteref{t1}}
\tnotetext[t1]{Research supported by NSF of China (Nos. 12471439, 12231007).}

\author[addr1]{Yuxu Chen}
\address[addr1]{School of Mathematics, Sichuan University, Chengdu, P.R. China  610065}
\ead{chenyuxu@scu.edu.cn}

\author[addr1]{Hui Kou}
% \address[addr1]{School of Mathematics, Sichuan University}}
\ead{kouhui@scu.edu.cn}

\author[addr1]{Zhenchao Lyu}
% \address[addr1]{School of Mathematics, Sichuan University}}
\ead{zhenchaolyu@scu.edu.cn}

\begin{abstract}
We prove that Lawson's planar closed-disk domain is not an RB-domain.  This domain is the dcpo of all closed disks in the Euclidean plane, together with the whole plane as bottom, ordered by reverse inclusion.  
Since this domain is an FS-domain, it gives a concrete example of an FS-domain which is not an RB-domain,  answering negatively the long-standing open problem in domain theory of whether FS-domains and RB-domains are identical.  
\end{abstract}

\begin{keyword}
closed disk domain \sep FS-domain \sep RB-domain  \sep Lorentz cone 
\MSC[2020] 06B35 \sep 06B30 \sep 54E50 \sep 52A20 \sep 68Q55
\end{keyword}

\end{frontmatter}

\section{Introduction}

Domain theory was introduced by Dana Scott to give mathematical model to denotational semantics of programming languages \cite{Scott1976,Scott1982,Scott1993}. 
In order to interpret higher order types in programming languages,  a particularly important requirement on categories of semantic domains is that they should be cartesian closed (i.e. closed under function spaces). The most important objects in domain theory are continuous domains and algebraic domains.  However, the category \textbf{CONT} (resp. \textbf{ALG} ) of continuous domains  (resp. algebraic domains) and Scott continuous maps is not cartesian closed.   Hence,  looking for cartesian closed full subcategory of \textbf{CONT} (resp. \textbf{ALG}) is an important task in domain theory. In 1983, Smyth \cite{Smyth1983} proved that the category of \textbf{SFP} (aka countably based bifinite domains), which was introduced by Gordon Plotkin \cite{Plotkin1981}, is the largest cartesian closed full subcategory of the category of $\omega$- algebraic pointed dcpos. Following Smyth's path,
 Achim Jung \cite{Jung1989,Jung1990} introduced two special domains called $L$-domain and FS-domain and  showed that, the categories \textbf{CL} and \textbf{FS} consisting  of continuous $L$-domains and  FS-domains respectively, are the only two  maximal cartesian closed full subcategories of \textbf{CONT} if one works in pointed continuous domains. Replacing FS-domais by bifinite domains, the  algebraic case  also holds. Particularly, bifinite domains are equal to algebraic FS-domains, and the retracts of  bifinite domains, called RB-domains,  form a cartesian closed category \cite[Proposition~4.2.12]{AbramskyJung1994}.  
 Since each RB-domain is an FS-domain, it arises 
a famous open problem in domain theory since 1990s  \cite{AbramskyJung1994,Gierz2003} as follows : are  FS-domains and RB-domains  identical?  In the setting of $L$-domains, Liang and Keimel gave a positive answer \cite{LiangKeimel1999} and similar results are obtained in \cite{ZouLiGuo2018}. 
 In this paper we give a negative answer to this question by proving that Lawson's planar closed-disk domain is not an RB-domain.

We denote by $\Disc$ the domain of closed disks in the Euclidean plane. Its elements are all closed disks
\[
        \overline B(\bz,r)=\{\bw\in\R^2: |\bw-\bz|\le r\},
        \qquad \bz\in\R^2,\quad r\ge 0,
\]
together with the whole plane \(\R^2\), ordered by reverse inclusion.  The whole plane is the least element.  Jung discussed this example in his classification paper and explicitly wrote that it was suggested by Jimmie Lawson \cite{Jung1990}.  
The same example is also recorded in Abramsky and Jung's domain theory chapter as a well-structured FS-domain for which the RB question was open \cite[Section~4.2.2, Example after Proposition~4.2.12]{AbramskyJung1994}.  Lawson later proved a general FS-domain theorem for formal balls over suitable metric spaces \cite{Lawson2008}. $\Disc$ is a special case of this general result, and hence is an FS-domain.

The disk \(\overline B(\bz,r)\) can also be represented by the following point of \(\R^3\).
\[
        \bx=(-r,\bz)\in H:=(-\infty,0]\times\R^2.
\]

In this paper, we prove that \(\Disc\) is not an RB-domain.  Inspired by Keimel's work  \cite{Keimel2009} on the connection between the \(\Disc\) domain and the  convex cones, the proof uses a geometric representation of the disk domain in \(\R^3\) and a convex-geometric argument involving the Lorentz cone.  
Let \(
        C=\{(t,z)\in\R\times\R^2:t\ge |z|\}
\) be the Lorentz-cone.
From the perspective of the cone order, the reverse inclusion of disks is equivalent to the Lorentz-cone order~\cite{Keimel2009} as follows:
\[
        \bx\le_C \by \quad\Longleftrightarrow\quad \by-\bx\in C.
\]
Indeed,
\[
        \overline B(\bz,r)\supseteq \overline B(\bw,s)
        \quad\Longleftrightarrow\quad
        |\bz-\bw|\le r-s
        \quad\Longleftrightarrow\quad
        (-s,\bw)-(-r,\bz)\in C.
\]

The main idea is to show that if \(\Disc\) were an RB-domain, then there would exist finite-image \(C\)-monotone maps approximating the identity map on compact subsets of \((- \infty,0)\times\R^2\).  However, this leads to a contradiction with the properties of the Lorentz cone and its associated matrix cone.

The paper is organized as follows.  Section \ref{sec:domain-prelim} recalls the domain-theoretic definitions and introduces the closed-disk domain.  Section \ref{sec:disk-properties} records the order-theoretic properties of the disk model and proves the RB local approximation lemma.  Section \ref{sec:cone-functional} collects the Lorentz-cone, convex-geometric, and matrix facts used later.  Section \ref{sec:proof} proves the finite-image monotone matrix lemma and the main non-RB theorem.

\section{Preliminaries }\label{sec:domain-prelim}

In this section, we recall the domain-theoretic definitions and introduce the planar closed-disk domain. For the convenience of readers more familiar with domain theory than with
measure theory, we record the measure-theoretic and analytic conventions
used in the proof of the main theorem.

\subsection{Domain-theoretic definitions and the planar disk domain}
We use standard terminology from domain theory; see \cite{AbramskyJung1994,Gierz2003,Goubault2013}.  A subset \(A\) of a poset is \emph{directed} if it is nonempty and every two elements of \(A\) have an upper bound in \(A\).  A \emph{dcpo} is a poset in which every directed subset has a supremum.  A map between dcpos is \emph{Scott-continuous} if it is monotone and preserves directed suprema. Let $D,E$ be two dcpos. The pointwise order between two Scott continuous maps $f,g$ from $D$ into $E$ is defined as follows: 
$$f\leq g \quad\Longleftrightarrow\quad f(x)\leq g(x), \ \forall x\in D.$$

For elements \(x,y\) in a dcpo, one writes \(x\ll y\), called $x$ is \emph{way-below} $y$, if, whenever \(A\) is directed and \(y\le \sup A\), there is \(a\in A\) such that \(x\le a\).  A dcpo is \emph{continuous} if every element is the directed supremum of the elements way-below it.

Let \(D\) be a dcpo.  A Scott-continuous map \(f:D\to D\) is \emph{finitely separated from the identity} if there is a finite set \(M\subseteq D\) such that for every \(x\in D\) there exists \(m\in M\) with
\(
        f(x)\le m\le x.
\)
\begin{definition}[FS-domain]\cite[Definition~4.2.9]{AbramskyJung1994}
A dcpo \(D\) is an \emph{FS-domain} if there a directed family \((r_i)_{i\in I}\) of  Scott-continuous maps finitely separated from the identity \(\id_D\) such that  \(
        \bigvee_{i\in I} r_i=\id_D
\) pointwise.
\end{definition}

A \emph{deflation} on a dcpo \(D\) is a Scott-continuous finite-image map \(r:D\to D\) such that \(r\le \id_D\). A dcpo \(D\) is a \emph{bifinite domain} if the identity map \(\id_D\) is the pointwise supremum of a directed family of Scott-continuous idempotent deflations. A dcpo is an \emph{RB-domain} if it is a Scott-continuous retract of a bifinite domain.  We shall use the following standard result from the finite-deflation characterization of \emph{RB} domains.

\begin{definition}[RB-domain]\cite[Definition~9]{JungTix1998}
        A dcpo \(D\) is an \emph{RB-domain} if there is a directed family \((r_i)_{i\in I}\) of deflations on \(D\) such that \(
        \bigvee_{i\in I} r_i=\id_D
\)
pointwise. 
\end{definition}

The categories \textbf{FS}, \textbf{RB} and \textbf{Bif} consisting of all FS-domians, RB-domains and bifinite domains respectively are cartesian closed \cite{AbramskyJung1994,Jung1989,Jung1990}, and they  have the special importance in domain theory. 

Let \(\Disc\) denote the planar closed-disk domain.  Its non-bottom elements are the closed disks \(\overline B(\bz,r)=\{\bw\in\R^2: |\bw-\bz|\le r\}\) with \(\bz\in\R^2\) and \(r\ge0\).  The order is reverse inclusion:
\[
        \overline B(\bz,r)\le \overline B(\bw,s)
        \quad\Longleftrightarrow\quad
        \overline B(\bz,r)\supseteq \overline B(\bw,s).
\]
The least element is the whole plane \(\R^2\).  Equivalently, the non-bottom part may be written as the formal-ball space \(\R^2\times\R_+\), ordered by
\[
        (\bz,r)\le (\bw,s)
        \quad\Longleftrightarrow\quad
        |\bz-\bw|\le r-s.
\]

The following fact is due to Jung, with the example attributed there to Jimmie Lawson; see \cite{Jung1990}.  It also appears in the exposition of Abramsky and Jung \cite[Section~4.2.2, Example after Proposition~4.2.12]{AbramskyJung1994}; Lawson's  note \cite{Lawson2008} gives a general metric formal-ball FS result that includes the Euclidean disk case.
\begin{proposition}\cite{AbramskyJung1994,Lawson2008}\label{prop:jung-fs}
The planar closed-disk domain \(\Disc\) is an FS-domain.
\end{proposition}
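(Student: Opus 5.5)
The plan is to construct an explicit directed family of Scott-continuous maps on $\Disc$, each finitely separated from $\id_{\Disc}$, whose pointwise supremum is the identity. I will work in the formal-ball picture, where non-bottom elements are pairs $(\bz,r)$ with $\bz\in\R^2$, $r\ge0$, and the order is $(\bz,r)\le(\bw,s)$ iff $|\bz-\bw|\le r-s$. A directed supremum of balls is the limit of the centres and radii, or it is $\bot$.

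\emph{The maps.} Fix parameters $\delta>0$ and $R>0$. Write $u(\bz,r)=|\bz|+r$, and put
\[
k_R(u)=\frac{1}{2R-u}-\frac{1}{2R}\qquad\text{for } 0\le u<2R.
\]
Define $f_{\delta,R}(\bot)=\bot$ and $f_{\delta,R}(\bz,r)=\bot$ whenever $u(\bz,r)\ge 2R$. Otherwise set
\[
f_{\delta,R}(\bz,r)=\bigl(\bz,\;r+2\delta+k_R(u(\bz,r))\bigr).
\]
To normalise later, I may instead replace $k_R$ by $c_R\,k_R$ for a large constant $c_R$.

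\emph{Monotonicity.} Suppose $(\bz,r)\le(\bw,s)$. Then $u(\bw,s)\le|\bz|+|\bw-\bz|+s\le u(\bz,r)$. Since $k_R$ is nondecreasing, the difference of the new radii is at least $r-s\ge|\bz-\bw|$, so the images are ordered. The set $\{u<2R\}$ is an upper set by the same inequality. The radius is a continuous function of $(\bz,r)$ that tends to $\infty$ as $u\to 2R$. Along a directed family (a decreasing net of balls), centres and radii converge, so $f_{\delta,R}$ preserves directed suprema; this includes the case where images blow up to $\bot$ near the boundary. Hence $f_{\delta,R}$ is Scott-continuous, and clearly $f_{\delta,R}\le\id$.

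\emph{Directedness and supremum.} The family is directed when $(\delta,R)$ is ordered by $\delta'\le\delta$, $R'\ge R$. In that case $k_{R'}\le k_R$ on $[0,2R)$ and the domain of non-bottomness grows, so $f_{\delta,R}\le f_{\delta',R'}$ pointwise. As $\delta\to0$ and $R\to\infty$, the radius $r+2\delta+k_R(u)$ decreases to $r$ with the centre fixed. Hence $\bigvee f_{\delta,R}=\id$.

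\emph{Finite separation.} This is the main step. Fix $\eta>0$ so small that (after scaling $k_R$ by $c_R$ if needed) the radius of $f(\bz,r)$ is at least $4R$ whenever $2R-\eta\le u<2R$. The finite set $M$ will consist of three kinds of points.
\begin{itemize}[label={}]
\item The element $\bot$. This handles every $x$ with $f(x)=\bot$.
\item The single large disk $m_0=(\mathbf 0,2R)$. For $x=(\bz,r)$ with $u<2R$ we have $m_0\le x$, because $|\bz|\le 2R-r$. When $u\ge 2R-\eta$, we also have $f(x)\le m_0$, since the radius $h$ of $f(x)$ satisfies $h\ge 4R\ge 2R+|\bz|$.
\item The grid points $(\mathbf c,\rho)$, where $\mathbf c$ ranges over a finite $\delta/2$-net of the closed ball of radius $2R$, and $\rho$ ranges over $\delta\N\cap[0,2R+\delta]$. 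For $x=(\bz,r)$ with $u\le 2R-\eta$, choose $\mathbf c$ with $|\bz-\mathbf c|\le\delta/2$, and $\rho$ as the least grid value that is at least $r+\delta/2$, so $\rho\le r+3\delta/2$. Then $|\bz-\mathbf c|\le\rho-r$, which gives $m\le x$. Also $|\bz-\mathbf c|\le r+2\delta-\rho\le h-\rho$, which gives $f(x)\le m$.
\end{itemize}
These three cases cover every $x$, so each $f_{\delta,R}$ is finitely separated from $\id_{\Disc}$. By the definition of FS-domain, this proves the proposition.

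The main obstacle is reconciling Scott-continuity with finiteness. Cutting to $\bot$ abruptly at a boundary would break continuity, so the blow-up term $k_R$ is what makes both work at once. The point to check carefully is the monotonicity inequality $u(\bw,s)\le u(\bz,r)$, which is what allows an arbitrary nondecreasing $k_R$ to be used.
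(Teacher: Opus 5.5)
Your argument is correct, and it takes a genuinely different route from the paper, which gives no proof of its own. The paper simply cites Jung and Abramsky--Jung, where the example is recorded, and Lawson's general theorem on formal-ball domains over suitable metric spaces. You instead verify the FS property by hand:
\begin{enumerate}[label=(\roman*)]
\item the quantity $u(\bz,r)=|\bz|+r$, the radius of the smallest origin-centred disk containing $\overline B(\bz,r)$, is antitone, so $\{u<2R\}$ is an upper set;
\item enlarging radii by $2\delta$ gives the approximation;
\item a finite $\delta/2$-net of a bounded region gives the finite separating set.
\end{enumerate}
The citation gives generality but is a black box for the reader. Your proof is self-contained, and it uses only the triangle inequality, the fact that directed suprema are Euclidean limits, and finite nets of bounded sets. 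It should therefore adapt essentially verbatim to formal balls over any proper metric space.

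Two remarks. First, your diagnosis of the ``main obstacle'' is mistaken. The set $W=\{(\bz,r):|\bz|+r<2R\}$ is Scott open: it is an upper set, $u$ is Euclidean-continuous, and directed families of non-bottom elements converge to their suprema. So the abrupt cut-off, which sends $(\bz,r)\mapsto(\bz,r+2\delta)$ on $W$ and everything else to $\bot$, is already Scott-continuous. Your grid alone separates it on all of $W$, because that step only needs $|\bz|<2R$ and $r<2R$. Thus $k_R$, $\eta$ and $m_0$ are harmless but superfluous. Also, since $u$ decreases along directed families, images never ``blow up to $\bot$'' along one; only members not yet in $W$ are sent to $\bot$.

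Second, a bookkeeping fix. The least multiple $\rho$ of $\delta$ with $\rho\ge r+\delta/2$ only satisfies $\rho<r+3\delta/2$, which can exceed $2R+\delta$ when $\eta<\delta/2$. Take the radii in $\delta\N\cap[0,2R+2\delta]$ instead. You should also drop the optional rescaling of $k_R$ by $c_R$. It is unnecessary, since $k_R(u)\ge 1/\eta-1/(2R)$ already holds on $[2R-\eta,2R)$, and a fast-growing $c_R$ could break directedness.
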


\subsection{Measure-theoretic and analytic conventions}

We recall the elementary measure-theoretic and analytic facts used below~\cite{Rockafellar1970,Folland1999,EvansGariepy2015}. In this paper, integration is always understood in the Lebesgue sense. 

The Lebesgue measure of a set \(A\subseteq\mathbb R^k\) is denoted by \(\mathcal L^k(A)\).  A function \(f:\mathbb R^k\to\mathbb R\) is \emph{Lebesgue measurable} if the preimage of every Borel set is Lebesgue measurable.  A function \(f:\mathbb R^k\to\mathbb R\) is \emph{integrable} if it is Lebesgue measurable and
\(
        \int_{\mathbb R^k}|f|\,d\mathcal L^k<\infty.\)
For vector-valued or matrix-valued functions, measurability and
integrability are understood componentwise.  Thus, if
\(F=(F_{ij})\) is matrix-valued, then
\[
        \int F
        =
        \left(\int F_{ij}\right)_{ij},
\]
provided all entries \(F_{ij}\) are integrable.

For an open set \(U\subseteq\mathbb R^n\), the notation \(K\Subset U\)
means that \(K\) is compact and \(K\subseteq U\).  If a function \(g\) on
\(U\) satisfies \(\operatorname{supp}g\Subset U\), then \(g\) vanishes in a
neighbourhood of \(\partial U\).

\begin{proposition}[Tonelli--Fubini theorem]\cite[Theorem~2.37]{Folland1999}\label{prop:tonelli-fubini}
Let \(U\subseteq\mathbb R^n\) be Lebesgue measurable.

\begin{enumerate}
\item If \(G:\mathbb R\times U\to[0,\infty]\) is measurable, then
\[
        \int_{\mathbb R\times U}G(t,\bz)\,dt\,d\bz
        =
        \int_U
        \left(
        \int_{\mathbb R}G(t,\bz)\,dt
        \right)
        d\bz ,
\]
where both sides are allowed to be \(+\infty\).

\item If \(G\in L^1(\mathbb R\times U)\), then
\[
        \int_{\mathbb R\times U}G(t,\bz)\,dt\,d\bz
        =
        \int_U
        \left(
        \int_{\mathbb R}G(t,\bz)\,dt
        \right)
        d\bz
        =
        \int_{\mathbb R}
        \left(
        \int_U G(t,\bz)\,d\bz
        \right)
        dt .
\]
\end{enumerate}
\end{proposition}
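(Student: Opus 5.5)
The Tonelli--Fubini theorem in Proposition~\ref{prop:tonelli-fubini} is quoted from \cite{Folland1999}, and I would prove it by the standard measure-theoretic bootstrap. First I reduce to the whole space: extend \(G\) by zero outside \(\mathbb R\times U\). Since \(U\) is Lebesgue measurable, the extension is measurable on \(\mathbb R^{1+n}\), and both sides of each identity are unchanged. So it suffices to treat \(\mathbb R\times\mathbb R^n\), using \(\mathcal L^{1+n}=\mathcal L^1\otimes\mathcal L^n\) (completed).

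For part (1) I begin with Borel measurable \(G\ge 0\). Take first the indicator of a box \(A\times B\). Then \(\int_{\mathbb R}\mathbf 1_{A\times B}(t,\bz)\,dt=\mathcal L^1(A)\mathbf 1_B(\bz)\), and integrating in \(\bz\) gives \(\mathcal L^1(A)\mathcal L^n(B)=\mathcal L^{1+n}(A\times B)\). Next I treat the indicator of a general Borel set \(E\). The class of \(E\) for which
\begin{itemize}
\item the map \(\bz\mapsto\mathcal L^1(E^{\bz})\) is measurable, and
\item its integral equals \(\mathcal L^{1+n}(E)\),
\end{itemize}
is a Dynkin system, at least after intersecting with bounded boxes to keep all measures finite. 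It contains the \(\pi\)-system of boxes, so by the \(\pi\)--\(\lambda\) theorem it contains all Borel sets. Linearity then gives the identity for nonnegative simple functions. Finally, for nonnegative Borel \(G\) I choose simple \(0\le s_k\uparrow G\) and apply the monotone convergence theorem twice: once in the inner \(t\)-integral and once in the outer \(\bz\)-integral. Values \(+\infty\) are allowed throughout, which is exactly the form stated.

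The step I expect to be the main obstacle is passing from Borel to Lebesgue measurable \(G\), because \(\mathcal L^{1+n}\) is a completion and sections of Lebesgue measurable sets need not be measurable. I handle this with the null-set lemma: if \(N\) is \(\mathcal L^{1+n}\)-null, choose a Borel null set \(N'\supseteq N\). Tonelli for \(\mathbf 1_{N'}\) gives \(\mathcal L^1((N')^{\bz})=0\) for a.e.\ \(\bz\), hence \(N^{\bz}\) is null for a.e.\ \(\bz\). A Lebesgue measurable \(G\ge0\) agrees off such an \(N\) with a Borel function \(\tilde G\). So for a.e.\ \(\bz\) the section \(t\mapsto G(t,\bz)\) is measurable with the same integral as \(\tilde G\), and the inner integral is measurable after modification on a null set. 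Both sides of the identity therefore coincide with those for \(\tilde G\).

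For part (2) I apply part (1) to \(G^{+}\) and \(G^{-}\). Finiteness of \(\int|G|\) forces \(\int_{\mathbb R}|G(t,\bz)|\,dt<\infty\) for a.e.\ \(\bz\), so the inner integrals of \(G^\pm\) are a.e.\ finite and integrable in \(\bz\). Subtracting the two Tonelli identities gives the first equality. The argument is symmetric in the two factors: running the same construction with the roles of \(t\) and \(\bz\) exchanged yields the iterated integral in the other order, and hence the second equality.
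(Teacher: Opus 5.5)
Your proposal is correct and follows the standard proof; the paper gives no argument of its own and only cites Folland's Theorem~2.37, whose proof runs the same way: first Borel (product-$\sigma$-algebra) sets via a monotone-class or $\pi$--$\lambda$ argument on boxes, then simple functions and monotone convergence, then Lebesgue-measurable $G$ via a.e.\ null sections of null sets and an a.e.-equal Borel representative. One detail to make explicit is that your Dynkin class should also require each section $E^{\bz}$ to be measurable, so that $\mathcal L^1(E^{\bz})$ is defined; this condition is preserved by the same closure operations.
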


\begin{corollary}\label{cor:vertical-sections}
Let \(U\subseteq\mathbb R^n\) be Lebesgue measurable, and let
\(A\subseteq\mathbb R\times U\) be Lebesgue measurable.  If
\[
        A_{\bz}
        =
        \{t\in\mathbb R:(t,\bz)\in A\},
\]
then
\[
        \mathcal L^{n+1}(A)
        =
        \int_U \mathcal L^1(A_{\bz})\,d\bz .
\]
\end{corollary}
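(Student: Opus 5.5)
The statement is Corollary~\ref{cor:vertical-sections}. The plan is to apply part~1 of Proposition~\ref{prop:tonelli-fubini} to the indicator function of \(A\). I extend the indicator by zero, setting
\[
G(t,\bz)=\one_A(t,\bz),\qquad (t,\bz)\in\R\times U.
\]
Since \(A\) is Lebesgue measurable, \(G\) is a measurable function with values in \(\{0,1\}\subseteq[0,\infty]\). This is exactly the situation of the Tonelli part, so there is no integrability hypothesis to check; both sides may equal \(+\infty\).

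First, the left-hand side of Proposition~\ref{prop:tonelli-fubini}(1) becomes
\[
\int_{\R\times U}\one_A\,dt\,d\bz=\mathcal L^{n+1}(A),
\]
by the definition of the Lebesgue integral of an indicator function. Second, for each fixed \(\bz\in U\), the section of the indicator is the indicator of the section: \(t\mapsto \one_A(t,\bz)=\one_{A_{\bz}}(t)\). Hence
\[
\int_\R G(t,\bz)\,dt=\mathcal L^1(A_{\bz}).
\]
Substituting both identities into the Tonelli formula gives the claimed equality
\[
\mathcal L^{n+1}(A)=\int_U \mathcal L^1(A_{\bz})\,d\bz .
\]

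The only delicate point is measurability for Lebesgue rather than Borel sets. For Lebesgue measurable \(A\), the section \(A_{\bz}\) need only be measurable for almost every \(\bz\), and \(\bz\mapsto\mathcal L^1(A_{\bz})\) is measurable only after modification on a null set. This is precisely the content of the complete-measure (Lebesgue) version of Tonelli's theorem, which is the version stated in \cite[Theorem~2.37]{Folland1999}. I would therefore read the integrand on the right as defined almost everywhere, which does not affect the value of the integral, and state this convention explicitly. If one prefers to avoid it, one can first replace \(A\) by a Borel set \(B\supseteq A\) with \(\mathcal L^{n+1}(B\setminus A)=0\). The formula holds for \(B\) with every section measurable, and the null set \(B\setminus A\) has null sections for almost every \(\bz\) by the same formula applied to a Borel null set containing it. Hence the almost-everywhere defined values \(\mathcal L^1(A_{\bz})\) and \(\mathcal L^1(B_{\bz})\) agree for almost every \(\bz\).
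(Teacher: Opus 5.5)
Your proposal is correct and follows the paper's proof exactly: apply the Tonelli part of Proposition~\ref{prop:tonelli-fubini} to \(\one_A\), then identify \(\int_\R\one_A(t,\bz)\,dt\) with \(\mathcal L^1(A_{\bz})\). Your extra discussion is a valid refinement that the paper leaves implicit, namely that sections of a non-Borel \(A\) are measurable only for almost every \(\bz\), handled via the Borel-hull reduction (strictly, the complete-measure version in Folland is Theorem~2.39, whereas Theorem~2.37 is the product-\(\sigma\)-algebra version).
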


\begin{proof}
Apply Proposition~\ref{prop:tonelli-fubini} to the non-negative measurable
function \(\mathbf 1_A\).  Since
\[
        \mathcal L^{n+1}(A)
        =
        \int_{\mathbb R\times U}\mathbf 1_A(t,\bz)\,dt\,d\bz ,
\]
Tonelli's theorem gives
\[
\begin{aligned}
        \mathcal L^{n+1}(A)
        =
        \int_U
        \left(
        \int_{\mathbb R}\mathbf 1_A(t,\bz)\,dt
        \right)
        d\bz        
        =
        \int_U \mathcal L^1(A_{\bz})\,d\bz .
\end{aligned}
\]
\end{proof}

\begin{proposition}[Fundamental theorem of calculus for absolutely
continuous functions]\cite[Theorem~3.35]{Folland1999}\label{prop:ac-ftc}
Let \(a<b\), and let \(h \) be absolutely continuous.  Then \(h'\) exists almost
everywhere on \([a,b]\), \(h'\in L^1([a,b])\), and
\[
        h(x)-h(a)
        =
        \int_a^x h'(s)\,ds
        \qquad (a\le x\le b).
\]
In particular,
\[
        \int_a^b h'(s)\,ds
        =
        h(b)-h(a).
\]
\end{proposition}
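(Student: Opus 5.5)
This is the standard fundamental theorem of calculus for absolutely continuous functions, quoted from Folland. The plan is to rebuild it from the Lebesgue differentiation theory in three steps:
\begin{enumerate}
\item split \(h\) into a difference of increasing functions and get \(h'\) a.e. and integrable;
\item pass to the integral of \(h'\) with Fatou's lemma and show it agrees with \(h\) up to a constant;
\item evaluate the constant and specialise to \(x=b\).
\end{enumerate}

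\emph{Step 1.} Since \(h\) is absolutely continuous on \([a,b]\), it has bounded variation. Hence \(h=h_1-h_2\) with \(h_1,h_2\) increasing; for instance take \(h_1\) to be the variation function \(V(x)=T_a^x(h)\) and \(h_2=V-h\). Lebesgue's differentiation theorem for monotone functions gives that \(h_1'\) and \(h_2'\) exist almost everywhere, so \(h'\) exists almost everywhere.

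For integrability, apply Fatou's lemma to the nonnegative difference quotients \(n\bigl(h_i(x+1/n)-h_i(x)\bigr)\), extending \(h_i\) by \(h_i(b)\) to the right of \(b\). This gives
\[
\int_a^b h_i'\le h_i(b)-h_i(a).
\]
Hence \(h'\in L^1([a,b])\).

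\emph{Step 2.} Put \(H(x)=\int_a^x h'(s)\,ds\). The goal is \(F:=h-H\equiv 0\), and we argue this in three parts:
\begin{itemize}
\item By absolute continuity of the Lebesgue integral, \(H\) is absolutely continuous, so \(F\) is absolutely continuous.
\item By the Lebesgue differentiation theorem, \(H'=h'\) almost everywhere, so \(F'=0\) almost everywhere.
\item It remains to show that an absolutely continuous function with \(F'=0\) a.e. is constant.
\end{itemize}

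\emph{Step 3.} This is the main obstacle, since it is exactly where absolute continuity, and not mere continuity, is used (the Cantor function is the counterexample without it). Fix \(x\in(a,b]\) and \(\varepsilon>0\), and let \(\delta\) be the absolute-continuity modulus for \(\varepsilon\). Let \(E\subseteq(a,x)\) be the full-measure set where \(F'=0\). Each \(y\in E\) then has arbitrarily small intervals \([y,y+\eta]\) with \(|F(y+\eta)-F(y)|<\varepsilon\eta\).

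By the Vitali covering lemma, choose finitely many disjoint such intervals covering all of \([a,x]\) except a set of measure \(<\delta\). The complementary gaps are finitely many intervals of total length \(<\delta\), so the variation of \(F\) over them is \(<\varepsilon\) by absolute continuity. Telescoping over both families gives
\[
|F(x)-F(a)|<\varepsilon(x-a)+\varepsilon.
\]
Since \(\varepsilon\) is arbitrary, \(F(x)=F(a)=h(a)\).

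So \(h(x)-h(a)=\int_a^x h'(s)\,ds\) for every \(x\in[a,b]\). Taking \(x=b\) gives \(\int_a^b h'(s)\,ds=h(b)-h(a)\).
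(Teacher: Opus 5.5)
Your proof is correct. It is the classical real-variable argument (as in Royden), and every step checks out. This includes the Vitali step: the finitely many gaps have total length $<\delta$ because $E$ has full measure in $(a,x)$, so the absolute-continuity modulus controls them.

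The paper gives no proof of its own; it quotes Folland, Theorem~3.35, whose argument runs through measure theory instead. There one subtracts $h(a)$, extends $h$ to a normalized BV function on $\R$, and passes to its Lebesgue--Stieltjes measure $\mu_h$. One then shows that absolute continuity of $h$ is equivalent to $\mu_h\ll\mathcal L^1$, and identifies the Radon--Nikodym density with $h'$ almost everywhere via the differentiation theorem for measures. In that route your Step~3 is absorbed into the Lebesgue--Radon--Nikodym decomposition: absolute continuity rules out the singular part (the Cantor-function phenomenon) abstractly. As a bonus, the equivalence ``absolutely continuous $\Leftrightarrow$ indefinite integral'' comes out in one stroke. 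Your route needs no measure other than Lebesgue measure and shows concretely where absolute continuity enters.

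Note also that the paper only applies the proposition to compactly supported Lipschitz slices $h_{z_2}$. For those, your Step~1 already finishes the job if you replace Fatou's lemma by dominated convergence. The difference quotients $n\bigl(h(s+1/n)-h(s)\bigr)$ are bounded by the Lipschitz constant and converge a.e.\ to $h'$. Their integrals over $[a,b]$ equal $n\int_b^{b+1/n}h-n\int_a^{a+1/n}h\to h(b)-h(a)$. So Steps~2 and~3 are only needed for general absolutely continuous $h$.
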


\section{Order properties of the disk model and the RB local approximation}\label{sec:disk-properties}

We first introduce the Lorentz cone and the corresponding cone order.  A subset of an Euclidean space is \textit{closed} if it contains all limits of convergent sequences of its points, and it is \textit{convex} if the line segment between any two of its points is still contained in the subset.  A \textit{cone} is a nonempty closed convex set such that \(\lambda C\subseteq C\) for all \(\lambda\ge0\).  A cone is \emph{pointed} if \(C\cap(-C)=\{0\}\).  

Put
\(
        C=\{(t,\bz)\in\R\times\R^2:t\ge |\bz|\}
\)
and
\(
        H=(-\infty,0]\times\R^2.
\)
This is the standard three-dimensional Lorentz cone, also called the second-order cone; see \cite[Section~2.2.3]{BoydVandenberghe}.  In particular, it is a standard closed convex cone.  Its closedness is also immediate here: it is closed because it is the inverse image of the closed interval \([0,\infty)\) under the continuous function \((t,\bz)\mapsto t-|\bz|\).  It is convex by the triangle inequality: if \((t,\bz),(s,\bw)\in C\) and \(0\le\lambda\le1\), then
\[
        |\lambda\bz+(1-\lambda)\bw|
        \le \lambda|\bz|+(1-\lambda)|\bw|
        \le \lambda t+(1-\lambda)s.
\]
It is pointed, since \((t,\bz),-(t,\bz)\in C\) implies \(t\ge |\bz|\) and \(-t\ge |\bz|\), hence \(t=0\) and \(\bz=0\).  The self-duality of \(C\) is proved in Lemma~\ref{lem:self-dual} below.

The cone order induced by \(C\) is
\[
        \bx\le_C \by
        \quad\Longleftrightarrow\quad
        \by-\bx\in C.
\]
A map \(Q\) defined on a subset of \(\R^3\) is called \(C\)-monotone, if
\[
        \bx\le_C\by
        \quad\Longrightarrow\quad
        Q(\bx)\le_C Q(\by)
\]
whenever both \(\bx\) and \(\by\) lie in its domain.  Equivalently,
\[
        \by-\bx\in C
        \quad\Longrightarrow\quad
        Q(\by)-Q(\bx)\in C.
\]

The representation
\(
        \overline B(\bz,r)\longmapsto (-r,\bz)
\)
identifies the non-bottom part of \(\Disc\) with \(H\), ordered by \(\le_C\). Under this map, the reverse inclusion of disks is equivalent to the Lorentz-cone order. The point \(\be=(1,0,0)\) belongs to \(\Int C\), the interior of \(C\) in the Euclidean Topology. 

Besides, we shall use the following elementary order facts.
\begin{itemize}
\item \textbf{Compact order intervals.}  If \(\bu\le_C \bv, (\bu \neq \bot)\), then
\[
        [\bu,\bv]_C=\{\bh:\bu\le_C \bh\le_C \bv\}
        =(\bu+C)\cap(\bv-C)
\]
is compact.  The equality follows immediately from the definition of \(\le_C\).  The set is closed because \(C\) is closed.  To prove boundedness, write
\(
        \bu=(u_0,\bu'),
        \bv=(v_0,\bv'),
        \bh=(a,\bp).
\)
If \(\bh\in[\bu,\bv]_C\), then
\(
        \bh-\bu\in C
        \text{ and }
        \bv-\bh\in C.
\)
Hence
\(
        a-u_0\ge |\bp-\bu'|
        \text{ and }
        v_0-a\ge |\bv'-\bp|.
\)
In particular,
\(
        u_0\le a\le v_0.
\)
Moreover,
\(
        |\bp-\bu'|\le a-u_0\le v_0-u_0,
\)
and therefore
\(
        |\bp|\le |\bu'|+v_0-u_0.
\)
Thus both the scalar coordinate \(a\) and the spatial coordinate \(\bp\)
are bounded.  Hence \([\bu,\bv]_C\) is bounded.  Since it is also closed in the finite-dimensional Euclidean space \(\R^3\), the Heine--Borel theorem implies that \([\bu,\bv]_C\) is compact.

\item \textbf{Directed suprema and Euclidean convergence.}  Directed subsets of \(H\) have suprema in \(H\), and the corresponding directed nets converge to their suprema in the Euclidean topology.  Indeed, let \(A\subseteq H\) be directed and write its elements as \(\bx=(t_x,\bz_x)\).  The real numbers \(t_x\le0\) are directed upward and bounded above, hence have a supremum \(\tau\le0\).  If \(\bx,\by\in A\) and \(\bg\in A\) is an upper bound of them, then
\[
        |\bz_x-\bz_y|
        \le |\bz_x-\bz_g|+|\bz_g-\bz_y|
        \le (t_g-t_x)+(t_g-t_y).
\]
On a cofinal tail where \(t_x\) and \(t_y\) are close to \(\tau\), the right-hand side is arbitrarily small.  Thus \((\bz_x)\) is a Cauchy net in \(\R^2\), with some limit \(\bz\).  Closedness of the cone order gives \(\bx\le_C(\tau,\bz)\) for all \(\bx\in A\), while every upper bound \(\bw\) of \(A\) satisfies \((\tau,\bz)\le_C \bw\) by passing to the same limit.  Hence \((\tau,\bz)=\sup A\), and the net converges to it.

Finally, for \(\bu,\bv\in H\),
\[
        \bu\ll \bv
        \quad\Longleftrightarrow\quad
        \bv-\bu\in\Int C.
\]
The forward implication follows by applying \(\ll\) to the increasing sequence \(\bv-k^{-1}\be\), whose supremum is \(\bv\).  Conversely, if \(\bv-\bu\in\Int C\) and \(A\) is directed with \(\bv\le_C\sup A=\bs\), then
\[
        \bs-\bu=(\bs-\bv)+(\bv-\bu)\in C+\Int C\subseteq\Int C.
\]
Since the net \(A\) converges to \(\bs\), some element of \(A\) belongs to the open set \(\bu+\Int C\), and therefore lies above \(\bu\).
\end{itemize}

The order on \(\Disc\) becomes particularly transparent after identifying
non-bottom disks with points of the negative Lorentz cone.  We first record
the corresponding compactness and way-below properties, because these are
the only order-theoretic ingredients needed to extract finite-image
approximations from the RB assumption.
In this paper, we will use a contradiction argument, i.e.,  assume that \(\Disc\) is an RB-domain and derive a contradiction.  
%The following lemma is the only point where the RB assumption is used.

\begin{lemma}\label{lem:rb-local}
Assume that \(\Disc\) is an RB-domain.  Let
\(
        P\subseteq (-\infty,0)\times\R^2 = \Int(H)
\)
be a compact rectangle with nonempty interior.  For every \(\varepsilon>0\), there is a finite-image \(C\)-monotone map
\(
        Q_\varepsilon:P\longrightarrow H
\)
such that, for every \(\bx\in P\),
\(
        \bx-\varepsilon\be\le_C Q_\varepsilon(\bx)\le_C \bx.
\)
Consequently,
\(
        \sup_{\bx\in P}\|Q_\varepsilon(\bx)-\bx\|\to0
\)
as \(\varepsilon \rightarrow 0\).
\end{lemma}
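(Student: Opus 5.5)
We argue directly from the deflation characterization of RB-domains (Definition of RB-domain, after Jung--Tix), combining compactness of $P$ with the way-below description $\bu\ll\bv\iff\bv-\bu\in\Int C$ established above. Fix $\varepsilon>0$ and let $(r_i)_{i\in I}$ be a directed family of deflations on $\Disc$ with $\bigvee_i r_i=\id$. We identify the non-bottom part of $\Disc$ with $H$. We look for a single index $i$ such that
\[
\bx-\varepsilon\be\le_C r_i(\bx)\quad\text{for all }\bx\in P.
\]
Then we set $Q_\varepsilon=r_i|_P$. This map has finite image because $r_i$ does. It is $C$-monotone because $r_i$ is Scott-continuous, hence monotone for the order of $\Disc$, which is $\le_C$ on $H$. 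Moreover $Q_\varepsilon(\bx)\le_C\bx$ since $r_i\le\id$. Finally, $Q_\varepsilon(\bx)\neq\bot$, so $Q_\varepsilon$ lands in $H$, because $r_i(\bx)$ lies above $\bx-\varepsilon\be\in H$ (note $\bx-\varepsilon\be\in H$ since $\bx\in\Int H$).

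To find $i$ we use compactness. For each $\by\in P$ the element $\by-\tfrac{\varepsilon}{2}\be$ lies in $H$ and is way-below $\by$, since the difference $\tfrac{\varepsilon}{2}\be$ lies in $\Int C$. As $\by=\sup_i r_i(\by)$ is a directed supremum, there is $i_\by$ with $\by-\tfrac\varepsilon2\be\le_C r_{i_\by}(\by)$. The set
\[
U_\by=\{\bx\in P:\ \bx-\varepsilon\be \ll \by-\tfrac\varepsilon2\be\}=\{\bx\in P:\ \by-\bx+\tfrac{\varepsilon}{2}\be\in\Int C\}
\]
is Euclidean-open in $P$ and contains $\by$. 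For $\bx\in U_\by$ we will additionally require $\bx\ge_C$ some fixed point below $\by$, so that monotonicity of $r_{i_\by}$ can be applied. This is the delicate step. The naive chain $\bx-\varepsilon\be\le_C \by-\tfrac\varepsilon2\be\le_C r(\by)$ compares with $r(\by)$, but we need a lower bound on $r(\bx)$, not on $r(\by)$.

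The fix is to apply way-below at a point below $\by$. Set $\by'=\by-\tfrac{\varepsilon}{4}\be$ and choose $i_\by$ with $\by'-\tfrac{\varepsilon}{4}\be\le_C r_{i_\by}(\by')$. Now take
\[
V_\by=\{\bx\in P:\ \bx-\by'\in\Int C\}.
\]
This set is open and contains $\by$. For $\bx\in V_\by$ we have $\by'\le_C\bx$, hence by monotonicity
\[
r_{i_\by}(\bx)\ge_C r_{i_\by}(\by')\ge_C \by-\tfrac\varepsilon2\be .
\]
We also need $\by-\tfrac\varepsilon2\be\ge_C\bx-\varepsilon\be$, that is, $\tfrac\varepsilon2\be-(\bx-\by)\in C$. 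This holds whenever $|\bx-\by|$ is small, because $\tfrac\varepsilon2\be\in\Int C$. So we shrink $V_\by$ to a small Euclidean ball around $\by$, intersected with $V_\by$. Cover $P$ by finitely many such neighbourhoods with centres $\by_1,\dots,\by_n$. By directedness pick $i$ above $i_{\by_1},\dots,i_{\by_n}$; then $r_i\ge r_{i_{\by_k}}$ pointwise, which gives the displayed inequality for every $\bx\in P$.

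For the final claim, $\bx-\varepsilon\be\le_C Q_\varepsilon(\bx)\le_C\bx$ places $Q_\varepsilon(\bx)$ in the interval $[\bx-\varepsilon\be,\bx]_C$. By the boundedness estimate for compact order intervals recorded above, applied with $\bu=\bx-\varepsilon\be$ and $\bv=\bx$, the time coordinate of $Q_\varepsilon(\bx)-\bx$ lies in $[-\varepsilon,0]$ and its spatial part has norm at most $\varepsilon$. Hence $\|Q_\varepsilon(\bx)-\bx\|\le\sqrt2\,\varepsilon$ uniformly on $P$, which tends to $0$. The main obstacle is the step just described: transferring the approximation at $\by$ to nearby $\bx$ in the correct direction. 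It is handled by approximating at $\by'$, a point slightly below $\by$.
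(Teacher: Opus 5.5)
Your argument is correct and is essentially the paper's proof. Like the paper, you apply way-below at a point slightly below the centre: you take $\by-\frac{\varepsilon}{2}\be\ll\by'=\by-\frac{\varepsilon}{4}\be$, where the paper takes $\bx-\frac{3\varepsilon}{4}\be\ll\bx-\frac{\varepsilon}{2}\be$. You then use the same open neighbourhood (points lying above that point, with $\bx-\varepsilon\be$ below the way-below element), a finite subcover and directedness; the only differences are the constants and your explicit bound $\sqrt2\,\varepsilon$ in place of the paper's compactness constant $M\varepsilon$.
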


\begin{proof}
By the definition of RB-domains, there is a directed family \((r_i)_{i\in I}\) of finite-image Scott-continuous maps on \(\Disc\) such that
\(
        r_i\le \id_\Disc
\)
and
\(
        \bigvee_{i\in I}r_i=\id_\Disc
\)
pointwise.  We identify the non-bottom part of \(\Disc\) with \(H\).

Fix \(\bx\in P\) and define
\[
        \bu_{\bx}=\bx-\frac{3\varepsilon}{4}\be,
        \qquad
        \bv_{\bx}=\bx-\frac{\varepsilon}{2}\be.
\]
Then
\[
        \bv_{\bx}-\bu_{\bx}=\frac{\varepsilon}{4}\be\in\Int C,
\]
so \(\bu_{\bx}\ll \bv_{\bx}\).  Since \(\bv_{\bx}=\bigvee_{i\in I} r_i(\bv_{\bx})\), there is an index \(i_{\bx}\) such that
\(
        \bu_{\bx}\le_C r_{i_{\bx}}(\bv_{\bx}).
\)
Define the Euclidean open neighbourhood
\[
        N_{\bx}=\left\{\by\in\R^3:
        \bx-\by+\frac{\varepsilon}{4}\be\in\Int C,
        \quad
        \by-\bx+\frac{\varepsilon}{2}\be\in\Int C
        \right\}.
\]
It contains \(\bx\).  If \(\by\in P\cap N_{\bx}\), then the two defining inequalities for \(N_{\bx}\) give
\[
        \bu_{\bx}-(\by-\varepsilon\be)
        =\bx-\by+\frac{\varepsilon}{4}\be\in C
\]
and
\[
        \by-\bv_{\bx}=\by-\bx+\frac{\varepsilon}{2}\be\in C.
\]
By the definition of the cone order, this means
\(
        \by-\varepsilon\be\le_C \bu_{\bx}
\)
and
\(
        \bv_{\bx}\le_C \by.
\)
Using monotonicity of \(r_{i_{\bx}}\) and \(r_{i_{\bx}}\le\id_\Disc\), we obtain
\[
        \by-\varepsilon\be
        \le_C \bu_{\bx}
        \le_C r_{i_{\bx}}(\bv_{\bx})
        \le_C r_{i_{\bx}}(\by)
        \le_C \by.
\]

Choose \(\bx_1,\ldots,\bx_m\in P\) such that
\(
        P\subseteq\bigcup_{j=1}^m N_{\bx_j}.
\)
Directedness gives an index \(i\) with
\(
        r_{i_{\bx_j}}\le r_i
\)
for \(j=1,\ldots,m\).  Then for every \(\by\in P\), choosing \(j\) with \(\by\in N_{\bx_j}\), we have
\[
        \by-\varepsilon\be
        \le_C r_{i_{\bx_j}}(\by)
        \le_C r_i(\by)
        \le_C \by.
\]
The lower bound \(\by-\varepsilon\be\) is a non-bottom point of \(H\): if \(\by=(t,\bz)\in P\), then \(t<0\), and hence \(\by-\varepsilon\be=(t-\varepsilon,\bz)\in(-\infty,0)\times\R^2\).  Therefore \(r_i(\by)\) cannot be the bottom element, and so \(r_i(\by)\in H\).  Set
\(
        Q_\varepsilon(\by)=r_i(\by).
\)
This map is finite-image because \(r_i\) has finite image, and it is \(C\)-monotone because \(r_i\) is monotone.

Finally, the inequalities imply
\(
        \by-Q_\varepsilon(\by)\in [0,\varepsilon\be]_C
        =\varepsilon[0,\be]_C.
\)
The interval \([0,\be]_C\) is compact, hence bounded.  Thus there is a constant \(M>0\), independent of \(\by\) and \(\varepsilon\), such that
\[
        \|\by-Q_\varepsilon(\by)\|\le M\varepsilon
        \qquad (\by\in P).
\]
Therefore
\(
        \sup_{\by\in P}\|Q_\varepsilon(\by)-\by\|\le M\varepsilon\to0
\)
as \(\varepsilon\rightarrow 0\).
\end{proof}

\section{Lorentz cone and matrix preliminaries}\label{sec:cone-functional}

In this section, we record some linear-algebra and convex-geometric facts about the Lorentz cone and the corresponding matrix cone.

The scalar product on \(\R^3\) is the usual one.  For vectors \(\ba,\bb\in\R^3\), the symbol
\(
        \ba\otimes \bb
\)
means the rank-one matrix
\[
        \ba\otimes \bb=\ba\bb^T,
        \qquad
        (\ba\otimes \bb)_{ij}=a_i b_j.
\]
Equivalently, \((\ba\otimes \bb)\bh=(\bb\cdot \bh)\ba\).  We write
\(
M_3(\mathbb R)
=
\left\{(a_{ij})_{0\le i,j\le 2}:a_{ij}\in\mathbb R\right\}
\)
for the real vector space of all \(3\times3\) real matrices.  Thus \(M_3(\R)\cong\End(\R^3)\), and it has dimension \(9\).

The dual cone of \(C\) is
\(
        C^*=\{\bw\in\R^3:\bw\cdot \bv\ge0\text{ for all }\bv\in C\}.
\) The Lorentz cone \(C\) is self-dual, i.e., \(C^*=C\).  This is a standard fact.  For completeness, we give a short proof.
\begin{lemma}\label{lem:self-dual}
The Lorentz cone \(C\) is self-dual: \(C^*=C\).
\end{lemma}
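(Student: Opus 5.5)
We prove the two inclusions \(C\subseteq C^*\) and \(C^*\subseteq C\) separately, writing vectors as \(\bw=(s,\bw')\) and \(\bv=(t,\bv')\) with \(s,t\in\R\) and \(\bw',\bv'\in\R^2\). The scalar product then reads \(\bw\cdot\bv=st+\bw'\cdot\bv'\).

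For \(C\subseteq C^*\), take \(\bw,\bv\in C\), so \(s\ge|\bw'|\ge 0\) and \(t\ge|\bv'|\ge0\). By Cauchy--Schwarz,
\[
        \bw\cdot\bv=st+\bw'\cdot\bv'\ge st-|\bw'|\,|\bv'|\ge st-st=0.
\]
The last step uses \(|\bw'|\,|\bv'|\le st\), which holds because both factors are nonnegative and dominated by \(s\) and \(t\) respectively. Hence \(\bw\in C^*\).

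For \(C^*\subseteq C\), let \(\bw=(s,\bw')\in C^*\). We test against well-chosen elements of \(C\).
\begin{itemize}
\item If \(\bw'=0\), testing with \(\be=(1,0,0)\in C\) gives \(s=\bw\cdot\be\ge0=|\bw'|\), so \(\bw\in C\).
\item If \(\bw'\neq0\), put \(\bv=(1,-\bw'/|\bw'|)\). This lies in \(C\), since its spatial part has norm \(1\). Then
\[
        0\le \bw\cdot\bv=s-|\bw'|,
\]
so \(s\ge|\bw'|\) and \(\bw\in C\).
\end{itemize}

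No step is a genuine obstacle. The only point needing care is the choice of test vector in the second inclusion: it must be the boundary ray of \(C\) pointing opposite to \(\bw'\), and the degenerate case \(\bw'=0\) must be handled separately, which the first case above does.
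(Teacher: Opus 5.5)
Your proof is correct and essentially the same as the paper's: Cauchy--Schwarz gives \(C\subseteq C^*\), and for the reverse inclusion you test against the same vectors the paper uses, namely \(\be\) when \(\bw'=0\) and the boundary ray opposite to \(\bw'\) otherwise. The only difference is that you argue directly, with the normalized vector \((1,-\bw'/|\bw'|)\), whereas the paper argues by contrapositive with \((|\bw'|,-\bw')\).
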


\begin{proof}
If \(\bv=(v_0,\bv')\in C\) and \(\bw=(w_0,\bw')\in C\), then
\[
        \bv\cdot \bw=v_0w_0+\bv'\cdot \bw'
        \ge v_0w_0-|\bv'||\bw'|
        \ge0.
\]
Hence \(C\subseteq C^*\).  Conversely, if \(\bw\notin C\), then \(w_0<|\bw'|\).  For \(\bw'\ne0\), take \(\bv=(|\bw'|,-\bw')\in C\).  Then
\[
        \bv\cdot \bw=|\bw'|w_0-|\bw'|^2<0.
\]
If \(\bw'=0\) and \(w_0<0\), take \(\bv=(1,0,0)\).  Thus \(\bw\notin C^*\).  Therefore \(C^*=C\).
\end{proof}

The cone of a set \(A\subseteq M_3(\R)\) is
\(
        \cone(A)=\left\{\sum_{j=1}^m \lambda_j A_j:m\in\N,\ \lambda_j\ge0,\ A_j\in A\right\}.
\)      
Since $C$ is a closed convex cone, we can define the matrix cone generated by $C$
as follows:
\[
        \RC=\cone\{\bv\otimes \bw:\bv,\bw\in C\}\subseteq M_3(\R).
\]
Here 
 \(T\in\RC\) if and only if
\[
        T=\sum_{j=1}^m \lambda_j \bv_j\otimes \bw_j
\]
for some \(m\), \(\lambda_j\ge0\), and \(\bv_j,\bw_j\in C\).  Since \(\lambda_j\bv_j\in C\), the coefficients may be absorbed into the first vector.

A convex combination of points \(\ba_0,\ldots,\ba_m\) in a vector space is a linear combination
\(
        \sum_{j=0}^m \lambda_j \ba_j            
\)
with \(\lambda_j\ge0\) and \(\sum_{j=0}^m \lambda_j=1\).  The convex hull of a set \(A\) is the set of all convex combinations of points in \(A\):
\[
        \conv(A)=\left\{\sum_{j=0}^m \lambda_j \ba_j:m\in\N,\ \lambda_j\ge0,\ \sum_{    j=0}^m \lambda_j=1,\ \ba_j\in A\right\}.
\]      
We use Caratheodory's theorem in the following standard form.
\begin{theorem}[Caratheodory's theorem] \cite[Theorem~17.1]{Rockafellar1970}\label{thm:caratheodory}
Let \(A\subseteq \R^d\).  If \(\bx\in\conv(A)\), then there exist points \(\ba_0,\ldots,\ba_d\in A\) and coefficients \(\lambda_0,\ldots,\lambda_d\ge0\) such that
\[
        \sum_{i=0}^d \lambda_i=1,
        \qquad
        \bx=\sum_{i=0}^d \lambda_i \ba_i.
\]
Equivalently, every point of \(\conv(A)\) is a convex combination of at most \(d+1\) points of \(A\).
\end{theorem}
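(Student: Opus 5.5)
We need to prove Carathéodory's theorem: every point of \(\conv(A)\) in \(\R^d\) is a convex combination of at most \(d+1\) points of \(A\).

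The plan is the standard minimal-representation argument. Start from the definition of \(\conv(A)\): the point \(\bx\) can be written as \(\bx=\sum_{j=0}^m \lambda_j \ba_j\) with \(\ba_j\in A\), \(\lambda_j\ge0\) and \(\sum_j\lambda_j=1\). Among all such representations, choose one in which the number \(m+1\) of points is minimal. By dropping any terms with \(\lambda_j=0\), this minimality forces \(\lambda_j>0\) for every \(j\). It then suffices to show that \(m\le d\). Once that is done, if \(m<d\) we pad the representation with repeated points carrying zero coefficients, so that it involves exactly \(\ba_0,\ldots,\ba_d\) as in the statement.

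Suppose instead that \(m\ge d+1\). Then the \(m\ge d+1\) vectors \(\ba_1-\ba_0,\ldots,\ba_m-\ba_0\) lie in \(\R^d\) and are therefore linearly dependent. Hence there are reals \(\mu_0,\ldots,\mu_m\), not all zero, such that
\[
\sum_{j=0}^m\mu_j\ba_j=0,
\qquad
\sum_{j=0}^m\mu_j=0.
\]
To obtain these from a dependence \(\sum_{j\ge1}c_j(\ba_j-\ba_0)=0\), set \(\mu_j=c_j\) for \(j\ge1\) and \(\mu_0=-\sum_{j\ge1}c_j\). Because the \(\mu_j\) sum to zero and are not all zero, at least one of them is strictly positive.

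Now define
\[
t=\min\{\lambda_j/\mu_j:\mu_j>0\}>0,
\]
and put \(\lambda_j'=\lambda_j-t\mu_j\). We check the required properties in turn.

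\begin{itemize}
\item \(\sum_j\lambda_j'\ba_j=\bx\), because \(\sum_j\mu_j\ba_j=0\).
\item \(\sum_j\lambda_j'=1\), because \(\sum_j\mu_j=0\).
\item \(\lambda_j'\ge0\) for every \(j\). If \(\mu_j\le0\) this is immediate. If \(\mu_j>0\), the choice of \(t\) gives \(t\le\lambda_j/\mu_j\).
\item At least one \(\lambda_j'\) vanishes, namely at an index achieving the minimum defining \(t\).
\end{itemize}

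Deleting that point yields a representation of \(\bx\) as a convex combination of \(m\) points of \(A\). This contradicts the minimality of \(m\), so \(m\le d\).

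No step here is a real obstacle. The only point requiring care is the choice of \(t\): it must keep every coefficient nonnegative while killing at least one of them. That is exactly why \(t\) is taken as the minimum over the indices with \(\mu_j>0\), and why the condition \(\sum_j\mu_j=0\) is needed to guarantee such indices exist. Alternatively, we may simply invoke \cite[Theorem~17.1]{Rockafellar1970}, as the paper does.
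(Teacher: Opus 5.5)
Your proof is correct: it is the standard minimal-representation argument, and every step, including the choice of $t$ and the padding with zero coefficients, checks out. The paper gives no proof of its own and only cites Rockafellar's Theorem~17.1, whose proof is essentially your linear-dependence reduction, phrased there for the lifted vectors $(1,\ba_j)\in\R^{d+1}$ so that your affine dependence (with $\sum_j\mu_j=0$) becomes an ordinary linear dependence.
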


\begin{lemma}\label{lem:compact-base-cone}
Let \(E\) be a finite-dimensional real normed space.  Let \(S\subseteq E\) be compact, and suppose that there is a continuous linear functional \(\Phi:E\to\R\) such that
\(
        \Phi(s)=1
\)
for all \(s\in S\).  Then
\(
        \{rs:r\ge0,\ s\in S\}
\)
is closed in \(E\).
\end{lemma}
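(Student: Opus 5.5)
We prove closedness by sequences. Write \(K=\{rs:r\ge0,\ s\in S\}\) and take a sequence \(\bx_n=r_ns_n\in K\), with \(r_n\ge0\) and \(s_n\in S\), converging in \(E\) to some \(\bx\). We must show \(\bx\in K\).

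First we bound the coefficients. Apply \(\Phi\) and use \(\Phi(s_n)=1\) to get
\[
        r_n=\Phi(r_ns_n)=\Phi(\bx_n).
\]
Since \(\Phi\) is continuous (automatic in finite dimensions, and assumed anyway), \(r_n\to\Phi(\bx)\). So the sequence \((r_n)\) is bounded and converges to \(r:=\Phi(\bx)\ge0\). This is the only point where the hypothesis on \(\Phi\) is used. It rules out the bad case \(r_n\to\infty\) with \(s_n\to0\); that case cannot occur anyway, because \(0\notin S\) (as \(\Phi(0)=0\neq1\)).

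Next we pass to a limit in \(S\). By compactness of \(S\) (sequential compactness in the finite-dimensional normed space \(E\)), some subsequence satisfies \(s_{n_k}\to s\in S\). Along this subsequence,
\[
        \bx_{n_k}=r_{n_k}s_{n_k}\to rs,
\]
since scalar multiplication is jointly continuous. Uniqueness of limits then gives \(\bx=rs\in K\). Hence \(K\) is closed.

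The argument has essentially no obstacle. The only step needing care is bounding \(r_n\), and the linear normalizing functional \(\Phi\) handles it directly. If \(S\) is empty, \(K\) is empty and the statement is trivial.
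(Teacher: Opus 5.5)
Your proof is correct and follows the paper's argument: both apply \(\Phi\) to get \(r_n=\Phi(r_ns_n)\to\Phi(\bx)\) and then use compactness of \(S\). The only difference is cosmetic. The paper splits into the cases \(\Phi(\bx)=0\), where boundedness of \(S\) forces \(\bx=0\), and \(\Phi(\bx)>0\), where \(s_n=r_ns_n/r_n\to\bx/\Phi(\bx)\) converges outright and closedness of \(S\) suffices; you instead handle both cases at once by extracting a convergent subsequence of \((s_n)\).
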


\begin{proof}
Let \(r_n\bs_n\to \bx\), where \(r_n\ge0\) and \(\bs_n\in S\).  Applying \(\Phi\), we get
\(
        r_n=\Phi(r_n\bs_n)\to \Phi(\bx).
\)
If \(\Phi(\bx)=0\), then \(r_n\to0\).  Since \(S\) is compact, it is bounded, and hence \(r_n\bs_n\to0\).  Thus \(\bx=0\), which belongs to $\{rs:r\ge0,\ s\in S\}$.  If \(\Phi(\bx)>0\), then \(r_n>0\) for all large \(n\), and
\[
        \bs_n=\frac{r_n\bs_n}{r_n}\to \frac{\bx}{\Phi(\bx)}.
\]
Since \(S\) is closed, \(\bx/\Phi(\bx)\in S\), and so \(\bx\) belongs to $\{rs:r\ge0,\ s\in S\}$.
\end{proof}

In the following, we show  that the cone \(\RC\) is closed. 

\begin{proposition}\label{lem:RC-closed}
The cone \(\RC\) is closed in \(M_3(\R)\).
\end{proposition}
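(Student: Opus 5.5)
The plan is to realize \(\RC\) as the cone over a compact base and then invoke Lemma~\ref{lem:compact-base-cone} together with Caratheodory's theorem (Theorem~\ref{thm:caratheodory}). Let \(C_1=\{\bv\in C:\bv\cdot\be=1\}=\{(1,\bz):|\bz|\le1\}\). This is a compact disk. Every nonzero \(\bv\in C\) has \(v_0>0\), so \(\bv=v_0\bv_1\) with \(\bv_1\in C_1\). Consequently every generator \(\bv\otimes\bw\) with \(\bv,\bw\in C\setminus\{0\}\) equals \(v_0w_0\,(\bv_1\otimes\bw_1)\), and the generators involving \(0\) vanish. Put \(G=\{\bv\otimes\bw:\bv,\bw\in C_1\}\), the image of the compact set \(C_1\times C_1\) under the continuous map \((\bv,\bw)\mapsto\bv\otimes\bw\). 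Then \(G\) is compact and \(\RC=\cone(G)\).

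Next we define the linear functional \(\Phi(T)=\be^TT\be=T_{00}\) on \(M_3(\R)\). For \(\bv,\bw\in C_1\) we have \(\Phi(\bv\otimes\bw)=v_0w_0=1\), so \(\Phi\equiv1\) on \(G\), hence also on \(S:=\conv(G)\). The key claim is that \(\RC=\{rS': r\ge0, S'\in S\}\). For \(T=\sum_j\lambda_jG_j\) with \(\lambda_j\ge0\) and \(G_j\in G\), put \(r=\sum_j\lambda_j=\Phi(T)\). If \(r=0\), then \(T=0\). Otherwise \(T=r\sum_j(\lambda_j/r)G_j\in rS\). The reverse inclusion is immediate.

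It remains to check that \(S\) is compact, which is where Caratheodory's theorem enters. We identify \(M_3(\R)\cong\R^9\). By Theorem~\ref{thm:caratheodory} with \(d=9\), \(S\) is the image of the compact set \(\Delta_9\times G^{10}\), where \(\Delta_9\) is the standard simplex in \(\R^{10}\), under the continuous map \((\lambda,G_0,\dots,G_9)\mapsto\sum_i\lambda_iG_i\). Hence \(S\) is compact. Lemma~\ref{lem:compact-base-cone}, applied with \(E=M_3(\R)\), this \(S\), and \(\Phi\), then shows that \(\RC\) is closed.

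The only step needing care is the compactness of the convex hull. This is exactly why Caratheodory's theorem was recorded: without a uniform bound on the number of terms, the convex hull of a compact set need not obviously be closed. The remaining steps are routine normalizations.
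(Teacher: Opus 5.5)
Your proposal is correct and follows the paper's proof essentially step for step. You use the same compact base \(\{(1,\bz):|\bz|\le1\}\), the same compact set of normalized rank-one generators, Carath\'eodory in \(M_3(\R)\cong\R^9\) for compactness of the convex hull, the functional \(\Phi(T)=\be\cdot T\be\), and Lemma~\ref{lem:compact-base-cone} to conclude.
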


\begin{proof}
Let
\(
        K=\{\bv\in C:v_0=1\}=\{(1,\bz):|\bz|\le1\}.
\)
This is a compact disk.  The map
\(
        K\times K\to M_3(\R): (\bp,\bq)\mapsto \bp\otimes \bq
\)
is continuous, since in coordinates \(\bp\otimes \bq=(p_iq_j)_{0\le i,j\le2}\).  Hence
\(
        A=\{\bp\otimes \bq:\bp,\bq\in K\}
\)
is compact.

By Caratheodory's theorem in the nine-dimensional space \(M_3(\R)\), every element of \(\conv A\) is a convex combination of at most \(10\) elements of \(A\).  Hence \(\conv A\) is the image of the compact set
\[
        \Delta_9\times A^{10},
        \qquad
        \Delta_9=\{(\lambda_0,\ldots,\lambda_9):\lambda_i\ge0,
        \ \sum_{i=0}^9\lambda_i=1\},
\]
under the continuous map
\[
        (\lambda_0,\ldots,\lambda_9,A_0,\ldots,A_9)
        \longmapsto
        \sum_{i=0}^9\lambda_iA_i.
\]
Therefore
\(
        S=\conv A
\)
is compact.

Every nonzero \(\bv\in C\) can be written uniquely as
\(\bv=v_0\bp\), where \(v_0>0\) and \(\bp\in K\).  Hence each generator
\(\bv\otimes\bw\) of \(\RC\) is a non-negative scalar multiple of some
\(\bp\otimes\bq\in A\).  Therefore every finite conic combination of
generators has the form
\[
        \sum_{j=1}^n a_j\,\bv_j\otimes\bw_j
        =
        r\sum_{j=1}^n\lambda_j\,\bp_j\otimes\bq_j,
\]
where \(r\ge0\), \(\lambda_j\ge0\), \(\sum_j\lambda_j=1\), and
\(\bp_j\otimes\bq_j\in A\).  Thus it belongs to
\(\{rB:r\ge0,\ B\in S\}\).  Conversely, every element \(rB\), with
\(B\in S=\operatorname{conv}(A)\), is by definition a finite conic
combination of generators.  Hence
\[
        \RC=\{rB:r\ge0,\ B\in S\}.
\]

% Every finite conic combination of generators can be normalized into a scalar multiple of a convex combination of elements of \(A\), and conversely any \(rB\) with \(B=\sum\lambda_j\bp_j\otimes \bq_j\in S\) belongs to \(\RC\). Therefore,
% every nonzero \(\bv\in C\) can be written uniquely as \(\bv=v_0\bp\), with \(v_0>0\) and \(\bp\in K\), and then
% \[
%         \RC=\{rB:r\ge0,\ B\in S\}.
% \]

Given $T \in M_3 (\mathbb{R})$, let \(\Phi(T)=\be\cdot T\be\), where \(\be=(1,0,0)\).  For every \(\bp,\bq\in K\), we have 
\(
        \Phi(\bp\otimes \bq)=(\be\cdot \bp)(\bq\cdot \be)=1.
\)
Hence \(\Phi(B)=1\) for all \(B\in S\).  Lemma \ref{lem:compact-base-cone} now gives that \(\RC\) is closed.
\end{proof}

%We also need a simple stability property of closed convex cones under integration.  It says that if a matrix-valued integrand lies in \(\RC\)
%almost everywhere, then its integral also lies in \(\RC\).  This will be applied to the blockwise representation of finite-image monotone maps.
The following lemma is a  consequence of the finite-dimensional strong separation theorem for closed convex cones; see \cite[Corollary~11.4.1]{Rockafellar1970}.

\begin{lemma}\label{lem:cone-integral}
Let \(U\subseteq\mathbb R^n\) be a Lebesgue measurable set, and let
\(K\subseteq M_3(\mathbb R)\) be a closed convex cone.  If
\(F:U\to M_3(\mathbb R)\) is integrable and
\[
        F(\bz)\in K
\]
for almost every \(\bz\in U\), then
\[
        \int_U F(\bz)\,d\bz\in K.
\]
\end{lemma}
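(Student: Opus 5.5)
The plan is to prove the statement by duality, using the separation theorem cited just before it. Set $\mathbf{I}=\int_U F(\bz)\,d\bz\in M_3(\R)$. This integral is well defined entrywise because $F$ is integrable. Equip $M_3(\R)$ with the trace (Frobenius) inner product $\langle A,B\rangle=\tr(A^TB)=\sum_{i,j}a_{ij}b_{ij}$. Suppose, for contradiction, that $\mathbf{I}\notin K$. Since $K$ is a nonempty closed convex cone in a finite-dimensional space, \cite[Corollary~11.4.1]{Rockafellar1970} strictly separates the point $\mathbf{I}$ from $K$. Concretely, it gives a matrix $W\in M_3(\R)$ and a real number $\alpha$ such that
\[
\langle W,\mathbf{I}\rangle<\alpha\le\langle W,T\rangle\qquad\text{for all }T\in K.
\]

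Next I would use the cone structure to sharpen this inequality. Since $0\in K$, we get $\alpha\le0$. If some $T\in K$ had $\langle W,T\rangle<0$, then the multiples $\lambda T\in K$ with $\lambda\to\infty$ would make $\langle W,\lambda T\rangle\to-\infty$, which contradicts the lower bound $\alpha$. Hence $\langle W,T\rangle\ge0$ for every $T\in K$, so $W$ lies in the dual cone of $K$. We also get $\langle W,\mathbf{I}\rangle<\alpha\le0$.

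Now I would evaluate $\langle W,\mathbf{I}\rangle$ directly. The map $T\mapsto\langle W,T\rangle$ is a finite linear combination of matrix entries, and integration is defined componentwise. By linearity of the Lebesgue integral,
\[
\langle W,\mathbf{I}\rangle=\sum_{i,j}w_{ij}\int_U F_{ij}=\int_U\langle W,F(\bz)\rangle\,d\bz .
\]
The integrand $\langle W,F(\bz)\rangle$ is integrable. It is also nonnegative almost everywhere, because $F(\bz)\in K$ for almost every $\bz$ and $W$ is nonnegative on $K$. So the right-hand side is $\ge0$, which contradicts $\langle W,\mathbf{I}\rangle<0$. Therefore $\mathbf{I}\in K$.

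The only point needing care is that the separation is strict and comes with the correct sign. This is why $K$ has to be closed: for a non-closed cone, the integral could land on the boundary. The step from separation by a general affine functional to separation by a linear functional nonnegative on $K$ is the homogeneity argument above. Everything else is linearity of the integral, and measurability issues are handled by the componentwise conventions.
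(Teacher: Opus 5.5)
Your proof is correct and is essentially the paper's argument: strict separation of $\int_U F$ from $K$, the homogeneity argument that turns the affine separator into a linear functional nonnegative on $K$ and negative at the integral, and linearity of the coordinatewise integral to reach a contradiction. One small caveat concerns your closing aside: in finite dimensions, the integral of an integrable $K$-valued function lies in $K$ for every convex cone containing $0$, closed or not, so closedness is not essential to the statement itself. Closedness is only what makes the strict-separation step available, and that step is used both by you and by the paper.
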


\begin{proof}
Put
\[
        A=\int_U F(\bz)\,d\bz .
\]
Suppose, for a contradiction, that \(A\notin K\).  Since \(M_3(\mathbb R)\)
is finite-dimensional and \(K\) is a closed convex set, the strong separation theorem gives a nonzero linear functional \(L:M_3(\mathbb R)\to\mathbb R\) and a real number \(\alpha\) such that
\[
        L(A)<\alpha\le L(B)\qquad\text{for all }B\in K .
\]
Because \(K\) is a cone, \(0\in K\), and hence \(\alpha\le L(0)=0\).
Moreover, for every \(B\in K\) and every \(t>0\), we have \(tB\in K\), so
\(
        \alpha\le L(tB)=tL(B).
\)
If \(L(B)<0\) for some \(B\in K\), then \(tL(B)\to-\infty\) as
\(t\to\infty\), contradicting \(\alpha\le tL(B)\).  Therefore
\[
        L(B)\ge0\qquad\text{for all }B\in K .
\]
Since \(\alpha\le0\), the separation inequality also gives
\[
        L(A)<\alpha\le0.
\]
Thus we have found a linear functional \(L\) such that \(L\ge0\) on \(K\)
but \(L(A)<0\).

Since \(F(\bz)\in K\) almost everywhere, it follows that
\[
        L(F(\bz))\ge0
\]
almost everywhere.  Matrix-valued integrals here are ordinary coordinatewise
Lebesgue integrals.  Writing \(F(\bz)=(F_{ij}(\bz))\) and
\[
        L(B)=\sum_{i,j}c_{ij}B_{ij},
\]
linearity gives
\[
\begin{aligned}
        L\left(\int_U F(\bz)\,d\bz\right)
        =
        \sum_{i,j}c_{ij}\int_U F_{ij}(\bz)\,d\bz  
        =
        \int_U\sum_{i,j}c_{ij}F_{ij}(\bz)\,d\bz  
        =
        \int_U L(F(\bz))\,d\bz
        \ge0.
\end{aligned}
\]
This contradicts \(L(A)<0\).  Hence \(A\in K\).
\end{proof}
% \begin{lemma}\label{lem:cone-integral}

% Let \(U\subseteq\mathbb R^n\) be a Lebesgue measurable set, and let
% \(K\subseteq M_3(\mathbb R)\) be a closed convex cone.  If
% \(F:U\to M_3(\mathbb R)\) is integrable and
% \(
%         F(\bz)\in K
% \)
% for almost every \(\bz\in U\), then
% \[
%         \int_U F(\bz)\,d\bz\in K.
% \]
% \end{lemma}

% \begin{proof}
% Suppose the integral were not in \(K\).  By the finite-dimensional separation theorem, there would be a linear functional \(L\) on \(M_3(\R)\) such that \(L\ge0\) on \(K\), but
% \[
%         L\left(\int_U F(\bz)\dd \bz\right)<0.
% \]
% Since \(F(\bz)\in K\) almost everywhere, \(L(F(\bz))\ge0\) almost everywhere.  Matrix-valued integrals here are ordinary coordinatewise Lebesgue integrals.  Writing \(F(\bz)=(F_{ij}(\bz))\) and \(L(A)=\sum_{i,j}c_{ij}A_{ij}\), linearity gives
% \[
%         L\left(\int_U F(\bz)\dd \bz\right)
%         =\sum_{i,j}c_{ij}\int_U F_{ij}(\bz)\dd \bz
%         =\int_U\sum_{i,j}c_{ij}F_{ij}(\bz)\dd \bz
%         =\int_U L(F(\bz))\dd \bz\ge0,
% \]
% a contradiction.
% \end{proof}

We shall also use compactly supported test functions.  If \(\Omega\subseteq\R^3\) is open, then \(C_c^\infty(\Omega)\) denotes the space of smooth real-valued functions \(\psi:\Omega\to\R\) whose support is compact and contained in \(\Omega\).  We write a point of \(\R^3\) as \(\bx=(t,\bz)\), with \(\bz=(z_1,z_2)\in\R^2\), and we write \(\dd \bx=\dd t\,\dd z_1\,\dd z_2\) and \(\dd \bz=\dd z_1\,\dd z_2\).  Also,
\[
        \nabla\psi=(\partial_t\psi,\partial_{z_1}\psi,\partial_{z_2}\psi),
        \qquad
        \nabla_\bz\psi=(\partial_{z_1}\psi,\partial_{z_2}\psi).
\]
For a finite-image map \(Q:\Omega\to\R^3\), define the ordinary matrix integral if it is integrable:
\[
        T_Q(\psi)=-\int_\Omega Q(\bx)\otimes\nabla\psi(\bx)\dd \bx,
\]
which means entrywise Lebesgue integration:
\[
        (T_Q(\psi))_{ij}
        =-\int_\Omega Q_i(\bx)\,\partial_j\psi(\bx)\dd \bx.
\]
Thus \(\bx\mapsto Q(\bx)\otimes\nabla\psi(\bx)\) is simply a \(3\times3\) matrix-valued function.

We will use Rademacher's theorem in the standard Euclidean form.
\begin{theorem}[Rademacher's theorem]\cite[Theorem 3.2]{EvansGariepy2015}\label{thm:rademacher}
Let \(U\subseteq\R^n\) be open and let \(f:U\to\R^m\) be Lipschitz.  Then \(f\) is differentiable almost everywhere in \(U\) with respect to Lebesgue measure.  In particular, if \(f:U\subseteq\R^2\to\R\) is \(1\)-Lipschitz, then \(\nabla f(\bz)\) exists for almost every \(\bz\) and \(|\nabla f(\bz)|\le1\) there.
\end{theorem}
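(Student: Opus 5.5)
We would follow the classical proof of Evans and Gariepy. First we reduce to \(m=1\), since a vector-valued map is differentiable at a point exactly when each of its components is. Because differentiability is local, we may also extend \(f\) to a Lipschitz function on all of \(\R^n\) with the same constant (McShane extension, \(\tilde f(\bx)=\inf_{\by\in U}(f(\by)+L|\bx-\by|)\)). Call the Lipschitz constant \(L\).

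\textbf{Step 1 (directional derivatives exist a.e.).} Fix a unit vector \(\bv\). On every line \(\{\bx+s\bv\}\), the function \(s\mapsto f(\bx+s\bv)\) is Lipschitz, hence absolutely continuous. By Proposition~\ref{prop:ac-ftc} it is differentiable for a.e.\ \(s\). The set \(A_\bv\) where the directional derivative \(D_\bv f(\bx)\) fails to exist is Borel, since it is described by limsup and liminf of countably many continuous difference quotients. Corollary~\ref{cor:vertical-sections}, applied after a rotation taking \(\bv\) to the first coordinate axis, then gives \(\mathcal L^n(A_\bv)=0\). In particular the partial derivatives exist a.e., so \(\nabla f\) is defined a.e.

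\textbf{Step 2 (\(D_\bv f=\nabla f\cdot\bv\) a.e.).} Take \(\zeta\in C_c^\infty(\R^n)\). The difference quotients \((f(\bx+h\bv)-f(\bx))/h\) are bounded by \(L\) and converge a.e. to \(D_\bv f\). A change of variables moves the difference onto \(\zeta\), and dominated convergence gives
\[
\int D_\bv f\,\zeta\,d\bx=-\int f\,\bv\cdot\nabla\zeta\,d\bx=\sum_i v_i\int \partial_i f\,\zeta\,d\bx,
\]
where the last equality uses the same identity with \(\bv=\be_i\). Since \(\zeta\) is arbitrary, \(D_\bv f=\nabla f\cdot\bv\) a.e.

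\textbf{Step 3 (full differentiability).} Choose a countable dense set \(\{\bv_k\}\) of the unit sphere. Let \(G\) be the full-measure set of points where \(\nabla f\) exists and \(D_{\bv_k}f=\nabla f\cdot\bv_k\) for every \(k\). Fix \(\bx\in G\) and \(\varepsilon>0\). By compactness of the sphere, pick finitely many \(\bv_1,\dots,\bv_N\) that form an \(\varepsilon\)-net. Then pick \(\delta>0\) such that \(|f(\bx+h\bv_k)-f(\bx)-h\nabla f(\bx)\cdot\bv_k|\le\varepsilon |h|\) for \(|h|<\delta\) and \(k\le N\). For an arbitrary unit \(\bv\), choose \(\bv_k\) with \(|\bv-\bv_k|<\varepsilon\). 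The Lipschitz bound \(|f(\bx+h\bv)-f(\bx+h\bv_k)|\le L|h|\varepsilon\), together with \(|\nabla f(\bx)|\le\sqrt n L\), yields
\[
|f(\bx+h\bv)-f(\bx)-h\nabla f(\bx)\cdot\bv|\le (1+L+\sqrt nL)\varepsilon|h|
\]
uniformly in \(\bv\). This is differentiability at \(\bx\). We expect this step to be the main obstacle: the uniformity in \(\bv\) is exactly where the Lipschitz hypothesis, and not mere a.e.\ existence of partial derivatives, is indispensable.

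\textbf{The ``in particular'' clause.} If \(f\) is \(1\)-Lipschitz and differentiable at \(\bz\) with \(\nabla f(\bz)\ne0\), put \(\bv=\nabla f(\bz)/|\nabla f(\bz)|\). Then
\[
|\nabla f(\bz)|=D_\bv f(\bz)=\lim_{h\to0}\frac{f(\bz+h\bv)-f(\bz)}{h}\le 1 .
\]
If \(\nabla f(\bz)=0\), the bound holds trivially.
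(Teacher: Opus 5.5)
Your proposal is correct and is essentially the standard Evans--Gariepy argument (a.e.\ existence of directional derivatives via one-dimensional absolute continuity and Fubini, then the identification \(D_{\bv}f=\nabla f\cdot\bv\) a.e.\ by testing against \(C_c^\infty\) functions, then uniformity over a finite \(\varepsilon\)-net of directions using the Lipschitz bound); the paper gives no proof of its own and simply cites that source. Your deduction of \(|\nabla f(\bz)|\le 1\) from the difference quotient in the direction \(\nabla f(\bz)/|\nabla f(\bz)|\) correctly proves the \emph{in particular} clause, which the paper only asserts.
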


\section{The planar closed disk model is not an RB-domain}\label{sec:proof}

In this section we prove that Lawson's planar closed-disk example is not
an RB-domain.  The proof has two parts.  First, we show that every
finite-image \(C\)-monotone map gives, after testing against a nonnegative
smooth function, a matrix in \(\RC\).  Second, applying this to the
finite-image approximants coming from a hypothetical RB structure forces
the identity matrix \(I_3\) to belong to \(\RC\), which is impossible.

Throughout this section let
\(
        \Omega=(\alpha,\beta)\times U\subseteq\R\times\R^2
\)
where \(U\subseteq\R^2\) is open.  A set \(E\subseteq\Omega\) is called \(C\)-upper in \(\Omega\) if it is an upper set under the cone order \(\le_C\), i.e.,
\(
        \bx\in E,
         \by\in\Omega,
         \by-\bx\in C
        \quad\Longrightarrow\quad
        \by\in E.
\)
For sets \(E,F\), their symmetric difference is
\(
        E\triangle F=(E\setminus F)\cup(F\setminus E).
\)
The graph of a function \(f:U\to\R\) is
\(
        \graph(f)=\{(f(\bz),\bz):\bz\in U\}\subseteq\R\times\R^2.
\)

\begin{lemma}\label{lem:upper-epigraph}
Let \(E\subseteq\Omega\) be \(C\)-upper.  For \(\bz\in U\), define
\(
        f_E(\bz)=\inf\left(\{t\in(\alpha,\beta):(t,\bz)\in E\}\cup\{\beta\}\right).
\)
Then \(f_E:U\to[\alpha,\beta]\) is \(1\)-Lipschitz.  Moreover,
\(
        E\triangle\{(t,\bz)\in\Omega:t>f_E(\bz)\}
\)
is contained in \(\graph(f_E)\), and hence has three-dimensional Lebesgue measure zero.  In particular, \(E\) is Lebesgue measurable.
\end{lemma}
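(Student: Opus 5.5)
The plan has three parts: show that each vertical fibre of \(E\) is an upward ray, use the cone order to get the Lipschitz bound, and then read off the symmetric-difference and measurability claims via Corollary~\ref{cor:vertical-sections}.

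\emph{Fibres are rays.} Fix \(\bz\in U\) and put \(E_\bz=\{t\in(\alpha,\beta):(t,\bz)\in E\}\). Since \((s,\bz)-(t,\bz)=(s-t,0)\in C\) whenever \(s\ge t\), the \(C\)-upper property shows that \(E_\bz\) is an up-set of \((\alpha,\beta)\). Hence
\[
(f_E(\bz),\beta)\subseteq E_\bz\subseteq [f_E(\bz),\beta).
\]
This gives the second claim directly. For \(\bz\in U\), the fibre of \(E\triangle\{(t,\bz)\in\Omega:t>f_E(\bz)\}\) is at most the single point \(t=f_E(\bz)\), so the symmetric difference lies in \(\graph(f_E)\).

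\emph{Lipschitz bound.} Take \(\bz,\bw\in U\) and set \(d=|\bz-\bw|\). I will show \(f_E(\bw)\le f_E(\bz)+d\); by symmetry this gives the bound. If \(f_E(\bz)+d\ge\beta\), there is nothing to prove, since \(f_E\le\beta\). Otherwise take any \(t\in E_\bz\) with \(t+d<\beta\). Such \(t\) can be chosen arbitrarily close to \(f_E(\bz)\), because \(E_\bz\supseteq(f_E(\bz),\beta)\) and \(f_E(\bz)+d<\beta\). Then
\[
(t+d,\bw)-(t,\bz)=(d,\bw-\bz)\in C,
\]
and \((t+d,\bw)\in\Omega\), so \(t+d\in E_\bw\) and \(f_E(\bw)\le t+d\). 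Letting \(t\downarrow f_E(\bz)\) gives the claim. One edge case needs care: if \(E_\bz\) is empty, then \(f_E(\bz)=\beta\) and the inequality is trivial. So \(f_E\) is \(1\)-Lipschitz into \([\alpha,\beta]\).

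\emph{Measure zero and measurability.} Since \(f_E\) is continuous, \(\graph(f_E)\) is closed in \(\R\times U\) and therefore Borel. By Corollary~\ref{cor:vertical-sections}, its measure is \(\int_U\mathcal L^1(\{f_E(\bz)\})\,d\bz=0\). The set \(\{t>f_E(\bz)\}\cap\Omega\) is open by continuity. Hence \(E\) differs from an open set by a subset of a null set, and Lebesgue measure is complete, so \(E\) is measurable.

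The main obstacle is the boundary bookkeeping in the Lipschitz step, namely making sure the translated point stays inside \(\Omega\) and handling empty fibres. The rest is routine.
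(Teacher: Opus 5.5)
Your proposal is correct and follows essentially the same route as the paper. The fibres of \(E\) are upward rays by the vertical translation \((s-t,0)\in C\). The Lipschitz bound comes from translating by \((d,\bw-\bz)\in C\), with the same case split at \(f_E(\bz)+d\ge\beta\). Measurability follows from the graph being a Borel null set and completeness of Lebesgue measure; you cite Corollary~\ref{cor:vertical-sections} directly where the paper re-derives that Tonelli computation inline.
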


\begin{proof}
For fixed \(\bz\), the vertical section
\[
        E_{\bz}=\{t\in(\alpha,\beta):(t,\bz)\in E\}
\]
is an upper interval, because \((s,\bz) - (t,\bz) = (s-t,0)\in C\) whenever \(s\ge t\).  Thus, if \(t>f_E(\bz)\), then \((t,\bz)\in E\); and if \(t<f_E(\bz)\), then \((t,\bz)\notin E\).  Therefore \(E\) and the set \(G_{f_E}=\{(t,\bz)\in\Omega:t>f_E(\bz)\}\) can differ only when \(t=f_E(\bz)\), namely on \(\graph(f_E)\cap\Omega\).

We prove the Lipschitz estimate.  Let \(\bz,\bz'\in U\) and set \(r=|\bz-\bz'|\).  If \(f_E(\bz)+r\ge\beta\), then
\[
        f_E(\bz')\le\beta\le f_E(\bz)+r.
\]
Otherwise choose \(t\) with
\[
        f_E(\bz)<t<\beta-r.
\]
Then \((t,\bz)\in E\), and
\[
        (t+r,\bz')-(t,\bz)=(r,\bz'-\bz)\in C.
\]
Hence \((t+r,\bz')\in E\), and so \(f_E(\bz')\le t+r\).  Letting \(t\downarrow f_E(\bz)\), we get
\[
        f_E(\bz')\le f_E(\bz)+|\bz-\bz'|.
\]
Interchanging \(\bz\) and \(\bz'\) gives \(|f_E(\bz)-f_E(\bz')|\le |\bz-\bz'|\).  Hence \(f_E\) is continuous.

Let \(\mathcal L^k\) denote \(k\)-dimensional Lebesgue measure.  Since
\(f_E\) is continuous, \(\operatorname{graph}(f_E)\) is a Borel subset of
\(\mathbb R\times U\).  For each fixed \(\bz\in U\), the vertical section
of the graph is the singleton
\[
        \{t\in\mathbb R:(t,\bz)\in\operatorname{graph}(f_E)\}
        =
        \{f_E(\bz)\}.
\]
Hence this section has one-dimensional Lebesgue measure zero:
\[
        \mathcal L^1(\{f_E(\bz)\})=0.
\]
% By Tonelli-Fubini's theorem~\cite[Theorem~2.37]{Folland1999},
% \[
%         \mathcal L^3(\operatorname{graph}(f_E))
%         =
%         \int_U
%         \mathcal L^1\bigl(\{t\in\mathbb R:(t,\bz)\in
%         \operatorname{graph}(f_E)\}\bigr)\,d\bz
%         =
%         \int_U \mathcal L^1(\{f_E(\bz)\})\,d\bz
%         =
%         0.
% \]
Let
\(
        \Gamma=\operatorname{graph}(f_E).
\)
Since \(f_E\) is continuous, \(\Gamma\) is Borel, hence
\(\mathbf 1_\Gamma\) is measurable.  By the definition of Lebesgue measure,
\[
        \mathcal L^3(\Gamma)
        =
        \int_{\mathbb R\times U}\mathbf 1_\Gamma(t,\bz)\,dt\,d\bz .
\]
Applying Tonelli-Fubini's theorem  (see Proposition \ref{prop:tonelli-fubini})
%\cite[Theorem~2.37]{Folland1999} 
to the nonnegative measurable function
\(\mathbf 1_\Gamma\), we obtain
\[
\begin{aligned}
        \mathcal L^3(\Gamma)
        =
        \int_U
        \left(
        \int_{\mathbb R}\mathbf 1_\Gamma(t,\bz)\,dt
        \right)
        d\bz  
=
        \int_U
        \mathcal L^1
        \bigl(\{t\in\mathbb R:(t,\bz)\in\Gamma\}\bigr)
        d\bz .
\end{aligned}
\]
But for each \(\bz\in U\),
\[
        \{t\in\mathbb R:(t,\bz)\in\Gamma\}
        =
        \{f_E(\bz)\},
\]
and every singleton in \(\mathbb R\) has one-dimensional Lebesgue measure
zero.  Therefore
\[
        \mathcal L^3(\Gamma)
        =
        \int_U \mathcal L^1(\{f_E(\bz)\})\,d\bz
        =
        0.
\]
Set
\(
        G_{f_E}
        =
        \{(t,\bz)\in\Omega:t>f_E(\bz)\}.
\)
Since \(f_E\) is continuous, the function
\(
        (t,\bz)\longmapsto t-f_E(\bz)
\)
is continuous.  Hence
\(
        G_{f_E}
        =
        \{(t,\bz):t-f_E(\bz)>0\}
\)
is open in \(\Omega\), and therefore is a Borel subset of \(\mathbb R\times U\).
Since \(E\triangle G_{f_E}\subseteq \operatorname{graph}(f_E)\) and \(\operatorname{graph}(f_E)\) is a null set, the symmetric
difference \(E\triangle G_{f_E}\) is null.  Thus \(E\) differs from the
Borel set \(G_{f_E}\) by a null set.  Since Lebesgue measure is complete, \(E\) is Lebesgue
measurable.
\end{proof}

% The following identity is the elementary epigraph case of the Gauss--Green formula for Lipschitz graphs. 
%  We give the proof in order to keep the argument self-contained; it uses only Fubini's theorem, the one-dimensional fundamental theorem of calculus, and Rademacher's theorem.

The preceding lemma reduces \(C\)-upper sets to Lipschitz epigraphs.  The next formula computes the integral of the gradient of a test function over such an epigraph and rewrites it as an integral over the graph \(t=f(\bz)\).  This is the step that produces the cone vector
\((1,-\nabla f)\) we need in the following, which is the elementary epigraph case of the Gauss--Green formula for Lipschitz graphs. 

\begin{lemma}\label{lem:epigraph-integration}
Let \(U\subseteq\R^2\) be open, let \(\Omega=(\alpha,\beta)\times U\), and let \(f:U\to[\alpha,\beta]\) be \(1\)-Lipschitz.  Put
\[
        G_f=\{(t,\bz)\in\Omega:t>f(\bz)\}.
\]
For every \(\psi\in C_c^\infty(\Omega)\), one has
\[
        -\int_{G_f}\nabla\psi(t,\bz)\dd t\dd \bz
        =
        \int_U \psi(f(\bz),\bz)(1,-\nabla f(\bz))\dd \bz.
\]
Here \(\psi\) is understood as extended by zero outside \(\Omega\), and \(\nabla f(\bz)\) is defined for almost every \(\bz\in U\) by Rademacher's theorem.
\end{lemma}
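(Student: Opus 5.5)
The identity is vector-valued, so I will prove it one component at a time. The $t$-component will follow from the one-dimensional fundamental theorem of calculus. The two $\bz$-components will follow from differentiating a parametrized integral in $z_k$ and then integrating by parts in $\bz$, with the Lipschitz function $f$ entering through Rademacher's theorem (Theorem~\ref{thm:rademacher}).

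\textbf{Time component.} Extend $\psi$ by zero, so it is smooth with compact support in $\R^3$. By Tonelli--Fubini (Proposition~\ref{prop:tonelli-fubini}), since $\partial_t\psi$ is bounded with compact support,
\[
-\int_{G_f}\partial_t\psi\,dt\,d\bz=-\int_U\int_{f(\bz)}^{\beta}\partial_t\psi(t,\bz)\,dt\,d\bz .
\]
For fixed $\bz$, Proposition~\ref{prop:ac-ftc} gives $\int_{f(\bz)}^\beta\partial_t\psi\,dt=\psi(\beta,\bz)-\psi(f(\bz),\bz)=-\psi(f(\bz),\bz)$. Here $\psi(\beta,\bz)=0$ because $\operatorname{supp}\psi\Subset\Omega$. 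If $f(\bz)=\beta$, both sides vanish. This yields the first entry $\int_U\psi(f(\bz),\bz)\,d\bz$.

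\textbf{Spatial components.} Define $\Psi(\bz)=\int_{f(\bz)}^{\beta}\psi(t,\bz)\,dt$ for $\bz\in U$. This function is Lipschitz, since $f$ is $1$-Lipschitz and $\psi$ is smooth and bounded. It also has compact support in $U$, because $\psi(\cdot,\bz)\equiv0$ for $\bz$ outside the projection of $\operatorname{supp}\psi$. At every point where $f$ is differentiable, which holds almost everywhere by Rademacher's theorem, the chain rule and differentiation under the integral sign give
\[
\partial_{z_k}\Psi(\bz)=\int_{f(\bz)}^\beta\partial_{z_k}\psi(t,\bz)\,dt-\psi(f(\bz),\bz)\,\partial_{z_k}f(\bz).
\]
Next I use the fact that a compactly supported Lipschitz function on $U$ satisfies $\int_U\partial_{z_k}\Psi\,d\bz=0$. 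To see this, apply Fubini to slice along lines parallel to the $z_k$-axis. Each slice $s\mapsto\Psi$ is Lipschitz, hence absolutely continuous, with compact support. Proposition~\ref{prop:ac-ftc} then gives integral zero along a.e.\ line. On a.e.\ line the partial derivative there agrees with the one obtained from the a.e.\ gradient. Combining this with Fubini on $G_f$ gives
\[
-\int_{G_f}\partial_{z_k}\psi=-\int_U\psi(f(\bz),\bz)\,\partial_{z_k}f(\bz)\,d\bz ,
\]
which is exactly the $k$-th spatial entry of $\psi(f,\bz)(1,-\nabla f)$.

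\textbf{Main obstacle.} The delicate step is justifying the spatial components rigorously. One must check that $\Psi$ is Lipschitz and compactly supported inside $U$. One must also check that its a.e.\ partial derivatives, taken along coordinate lines, match the formula above: the one-variable slice derivative of $f$ along almost every line coincides a.e.\ with $\partial_{z_k}f$ from Rademacher's theorem. I would handle this directly on lines. For almost every line, the slice of $f$ is Lipschitz, hence differentiable a.e., and at those points the formula for $\partial_{z_k}\Psi$ follows from the one-variable chain rule. Fubini then upgrades this to the $2$D statement. As an alternative, one can mollify $f$ into smooth $f_\varepsilon\to f$ uniformly, with $\nabla f_\varepsilon\to\nabla f$ a.e.\ and $|\nabla f_\varepsilon|\le1$, and pass to the limit by dominated convergence.
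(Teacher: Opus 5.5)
Your proposal is correct and follows essentially the same route as the paper. The time component comes from the fundamental theorem of calculus on vertical lines. The spatial components use the auxiliary Lipschitz, compactly supported function $\Psi(\bz)=\int_{f(\bz)}^\beta\psi(t,\bz)\,dt$ (the paper's $H$), differentiated by the chain rule at Rademacher points of $f$ and shown to have zero integral by slicing and Proposition~\ref{prop:ac-ftc}. Your explicit check that, on almost every coordinate line, the slice derivative agrees almost everywhere with the formula obtained at Rademacher points of $f$ is a point the paper passes over quickly; it adds rigor and does not change the route.
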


\begin{proof}
Extend \(\psi\) by zero outside \(\Omega\).  The formula is a vector identity in \(\R^3=\R\times\R^2\), so we prove its time component and spatial components separately.

First consider the time component.  For each fixed \(\bz\in U\), the function \(t\mapsto\psi(t,\bz)\) is smooth and vanishes near \(t=\beta\).  Hence the fundamental theorem of calculus gives
\[
        -\int_{f(\bz)}^\beta \partial_t\psi(t,\bz)\dd t
        =-\bigl(\psi(\beta,\bz)-\psi(f(\bz),\bz)\bigr)
        =\psi(f(\bz),\bz).
\]
Integrating in \(\bz\), we obtain
\[
        -\int_U\int_{f(\bz)}^\beta \partial_t\psi(t,\bz)\dd t\dd \bz
        =\int_U\psi(f(\bz),\bz)\dd \bz.
\]
This is the first component of the desired formula.

We now treat the two spatial components.  Define
\[
        H(\bz)=\int_{f(\bz)}^\beta \psi(t,\bz)\dd t.
\]
We first verify that \(H\) is Lipschitz and compactly supported in \(U\).  Put
\(
        M_0=\|\psi\|_\infty
\)
and
\(
        M_1=\|\nabla_\bz\psi\|_\infty.
\)
For \(\bz,\bz'\in U\), using oriented integrals, we write
\[
\begin{aligned}
H(\bz)-H(\bz')
&=\int_{f(\bz)}^\beta\psi(t,\bz)\dd t
  -\int_{f(\bz')}^\beta\psi(t,\bz')\dd t  \\
&=\int_{f(\bz)}^\beta\bigl(\psi(t,\bz)-\psi(t,\bz')\bigr)\dd t
  +\int_{f(\bz)}^{f(\bz')}\psi(t,\bz')\dd t.
\end{aligned}
\]
Therefore
\[
        |H(\bz)-H(\bz')|
        \le (\beta-\alpha)M_1|\bz-\bz'|+M_0|f(\bz)-f(\bz')|
        \le ((\beta-\alpha)M_1+M_0)|\bz-\bz'|.
\]
Thus \(H\) is Lipschitz.  Let \(\pi_U:\Omega\to U\) be the projection \(\pi_U(t,\bz)=\bz\).  If \(\bz\notin\pi_U(\supp\psi)\), then \(\psi(t,\bz)=0\) for every \(t\), and hence \(H(\bz)=0\).  Thus
\[
        \{\bz:H(\bz)\ne0\}\subseteq \pi_U(\supp\psi).
\]
Since \(\supp\psi\Subset\Omega\) is compact and \(\pi_U\) is continuous, \(\pi_U(\supp\psi)\Subset U\) is compact.  Hence \(H\) has compact support in \(U\).

By Theorem \ref{thm:rademacher}, \(f\) is differentiable for almost every \(\bz\in U\).  At such a point, the Lipschitz rule for differentiating an integral with a variable lower limit gives
\[
        \nabla H(\bz)
        =\int_{f(\bz)}^\beta \nabla_\bz\psi(t,\bz)\dd t
        -\psi(f(\bz),\bz)\nabla f(\bz). \tag{*}
\]
The second term is the contribution from differentiating the lower limit \(f(\bz)\).  Since \(H\) is Lipschitz and compactly supported in \(U\), its almost-everywhere gradient is integrable.

Extend \(H\) by zero to \(\mathbb R^2\), and still denote the extension by
\(H\).  Since \(\operatorname{supp}H\Subset U\), this extension is compactly
supported and Lipschitz on \(\mathbb R^2\).  
For almost every fixed
\(z_2\), the function \(h :z_1\mapsto H(z_1,z_2)\) is compactly supported and
Lipschitz, with derivative \(\partial_{z_1}H(z_1,z_2)\) almost everywhere.
Choose \(a<b\) such that \(h(z_1)=0\) for \(z_1\notin[a,b]\).  
By the one-dimensional fundamental theorem of calculus for absolutely continuous
functions,
\[
        \int_{\mathbb R} h'(z_1)\,d z_1
        =
        \int_a^b h'(z_1)\,d z_1
        =
        h(b)-h(a)
        =
        0
\]
for almost every \(z_2\).  
% Since \(\partial_{z_1}H\) is integrable, Fubini's
% theorem yields
% \[
%         \int_{\mathbb R^2}\partial_{z_1}H(\bz)\,d\bz=0.
% \]

Since \(H\) is compactly supported on \(\mathbb R^2\), choose a rectangle
\([a,b]\times[c,d]\) such that
\[
        \operatorname{supp}H\subseteq [a,b]\times[c,d].
\]
Since \(\partial_{z_1}H\in L^1(\mathbb R^2)\), Fubini's theorem gives
\[
\begin{aligned}
        \int_{\mathbb R^2}\partial_{z_1}H(\bz)\,d\bz
        &=
        \int_{\mathbb R}
        \left(
        \int_{\mathbb R}
        \partial_{z_1}H(z_1,z_2)\,d z_1
        \right)
        d z_2 .
\end{aligned}
\]
For almost every \(z_2\), the slice
\[
        h_{z_2}(z_1)=H(z_1,z_2)
\]
is a Lipschitz function on \(\mathbb R\), is supported in \([a,b]\), and
satisfies
\[
        h_{z_2}'(z_1)=\partial_{z_1}H(z_1,z_2)
        \quad\text{for almost every }z_1.
\]
Hence \(h_{z_2}\) is absolutely continuous on \([a,b]\), and the
fundamental theorem of calculus for absolutely continuous functions gives
\[
\begin{aligned}
        \int_{\mathbb R}
        \partial_{z_1}H(z_1,z_2)\,d z_1=
        \int_a^b h_{z_2}'(z_1)\,d z_1  
        =h_{z_2}(b)-h_{z_2}(a)
        =0 .
\end{aligned}
\]
Therefore
\[
        \int_{\mathbb R^2}\partial_{z_1}H(\bz)\,d\bz
        =
        \int_{\mathbb R}0\,d z_2
        =
        0.
\]
The same argument in the \(z_2\)-direction gives
\[
        \int_{\mathbb R^2}\partial_{z_2}H(\bz)\,d\bz=0.
\]
Since \(H\) is supported in \(U\), the same identities hold over \(U\).
Therefore,
\[
        \int_U \nabla H(\bz)\,d\bz=0.
\]
Integrating the above formula 
and formula $(*)$ we get
\[
        0
        =\int_U\int_{f(\bz)}^\beta \nabla_\bz\psi(t,\bz)\dd t\dd \bz
        -\int_U\psi(f(\bz),\bz)\nabla f(\bz)\dd \bz.
\]
Therefore
\[
        -\int_U\int_{f(\bz)}^\beta \nabla_\bz\psi(t,\bz)\dd t\dd \bz
        =-
        \int_U\psi(f(\bz),\bz)\nabla f(\bz)\dd \bz.
\]
Combining this spatial identity with the time component gives the formula.
\end{proof}

We now apply the preceding epigraph formula to finite-image
\(C\)-monotone maps.  The level sets of such a map are \(C\)-upper, and
therefore each level contributes a rank-one matrix generated by two
vectors in \(C\).  The following proposition shows that the total tested
matrix necessarily belongs to \(\RC\).

\begin{proposition}\label{prop:finite-image-RC}
Let \(Q:\Omega\to\R^3\) be finite-image and \(C\)-monotone.  Then \(Q\) is Lebesgue measurable.  Moreover, for every non-negative \(\psi\in C_c^\infty(\Omega)\), the matrix integral \(T_Q(\psi)\) is well-defined and
\[
        T_Q(\psi)\in\RC.
\]
\end{proposition}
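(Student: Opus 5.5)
The plan is to decompose the finite-image monotone map $Q$ into a layer-cake sum of indicator functions of $C$-upper sets. Then Lemma~\ref{lem:epigraph-integration} is applied to each layer, and the pieces are reassembled via Lemma~\ref{lem:cone-integral} and Proposition~\ref{lem:RC-closed}.

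Let the image of $Q$ be $\{\bq_1,\dots,\bq_m\}$. The image is finite, so the $C$-upper subsets of $\Omega$ of the form
\[
E_{\bw}=\{\bx\in\Omega: Q(\bx)\ge_C \bw\}
\]
take only finitely many distinct values as $\bw$ ranges over $\R^3$. Each $E_{\bw}$ is $C$-upper by monotonicity and transitivity of $\le_C$. For measurability I would use a different decomposition. For each $j$ the set $\{Q\ge_C \bq_j\}$ is $C$-upper, hence Lebesgue measurable by Lemma~\ref{lem:upper-epigraph}. The level set $\{Q=\bq_j\}$ equals $\{Q\ge_C\bq_j\}\setminus\bigcup_{k:\bq_k\not\le_C\bq_j}\{Q\ge_C\bq_k\}$, using pointedness of $C$ (antisymmetry). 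So every level set is measurable, and $Q$ is a measurable simple function. It is bounded, so $T_Q(\psi)$ is a finite entrywise integral.

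For the cone membership, I would write each coordinate functional as a difference of dual-cone functionals. Concretely, I use the identity
\[
\bu\cdot T_Q(\psi)\bv=-\int_\Omega (\bu\cdot Q)\,(\bv\cdot\nabla\psi)\,d\bx .
\]
The cleaner route is the following. Pick $\bw_0$ with $\bw_0\le_C \bq_j$ for all $j$, which exists since finitely many points have a common lower bound ($\bw_0=-M\be$ for large $M$). Then order-layer: for $\bu\in C$, the scalar function $g=\bu\cdot Q$ is monotone along $\le_C$ (since $C^*=C$, Lemma~\ref{lem:self-dual}). It is finite-valued with values $c_1<\dots<c_p$, so
\[
g=c_1+\sum_{k\ge2}(c_k-c_{k-1})\mathbf 1_{\{g\ge c_k\}},
\]
and each $\{g\ge c_k\}$ is $C$-upper. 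However, this only gives $\bu\cdot T\in C$-type information for each $\bu$, i.e.\ membership in a bigger cone than $\RC$.

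To land in $\RC$ itself, I would instead use the vector layer-cake along a maximal chain. Fix a linear extension of the finite poset $\{\bq_j\}$ (in $\le_C$), say $\bq_{\sigma(1)},\dots,\bq_{\sigma(m)}$, where earlier means not above later. Set $F_k=\bigcup_{i\ge k}\{Q=\bq_{\sigma(i)}\}$. The catch is that these are $C$-upper only if the ordering is compatible; I would instead decompose $Q=\bq_{\min}+\sum_k \bd_k\mathbf 1_{E_k}$ with $\bd_k\in C$ and $E_k$ $C$-upper. This step is the main obstacle, since a general finite monotone map need not admit such a decomposition with increments in $C$. If it fails, I would fall back on duality: by the bipolar theorem and closedness of $\RC$ (Proposition~\ref{lem:RC-closed}), it suffices to show $\langle L,T_Q(\psi)\rangle\ge0$ for every $L$ with $\bv^TL\bw\ge0$ for all $\bv,\bw\in C$.

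Granting a decomposition with $\bd_k\in C$ and $C$-upper $E_k$, the constant term contributes $-\bq_{\min}\otimes\int_\Omega\nabla\psi=0$, by the compact support of $\psi$. Each layer contributes $\bd_k\otimes\bigl(-\int_{E_k}\nabla\psi\bigr)$. By Lemma~\ref{lem:upper-epigraph}, $E_k$ agrees a.e.\ with $G_{f_{E_k}}$, and Lemma~\ref{lem:epigraph-integration} gives
\[
-\int_{E_k}\nabla\psi=\int_U\psi(f(\bz),\bz)\,(1,-\nabla f(\bz))\,d\bz .
\]
Here $(1,-\nabla f)\in C$ a.e.\ because $|\nabla f|\le1$ (Theorem~\ref{thm:rademacher}), and $\psi\ge0$. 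Hence the integrand $\bd_k\otimes\psi(1,-\nabla f)$ lies in $\RC$ a.e., and Lemma~\ref{lem:cone-integral} puts each layer in $\RC$. Summing the layers keeps $T_Q(\psi)$ in $\RC$, since $\RC$ is a convex cone.
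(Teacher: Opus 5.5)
Your measurability argument is correct: the level sets are Boolean combinations of the \(C\)-upper sets \(\{Q\ge_C\bq_j\}\), using antisymmetry of \(\le_C\). You are also right to discard the scalar layer-cake, since it only gives \(\bu^{T}T_Q(\psi)\bv\ge0\) for \(\bu,\bv\in C\), and \(I_3\) itself has that property. The gap is that the whole cone-membership argument is conditional on a decomposition \(Q=\bw_0+\sum_k\mathbf d_k\one_{E_k}\) with every \(\mathbf d_k\in C\) and every \(E_k\) \(C\)-upper. As you suspected, such a decomposition need not exist. Suppose it did, and take points \(\bx_1,\bx_2\le_C\by_1,\by_2\). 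Put \(\mathbf c=\bw_0+\sum_{k\in K}\mathbf d_k\), where \(K\) is the set of \(k\) with \(\bx_1\in E_k\) or \(\bx_2\in E_k\). Then \(Q(\bx_i)\le_C\mathbf c\le_C Q(\by_j)\) for all \(i,j\), so \(\mathbf c\) interpolates the four values. The Lorentz order has no Riesz interpolation, and finite-image monotone maps can carry such a crown of non-interpolable values. For instance, build \(Q\) from the upper sets \(\{t>f_i(\bz)\}\) and \(\{t>h_j(\bz)\}\), with \(f_1=-\max(z_1,0)\), \(f_2=-\max(-z_1,0)\), \(h_1=\max(z_2,0)\), \(h_2=\max(-z_2,0)\); these satisfy \(\max(f_1,f_2)=0=\min(h_1,h_2)\). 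Your duality fallback does not close the gap either: checking \(L(T_Q(\psi))\ge0\) for every \(L\ge0\) on \(\RC\) just restates the claim.

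The missing idea is that the increments never need to lie in \(C\) globally. They only need to after a pointwise regrouping along each vertical line \(\{(t,\bz):t\in(\alpha,\beta)\}\), on which \(\le_C\) is a chain. The paper picks \(\blambda\in\Int C\) separating the values, orders them by \(\blambda\cdot\bq_1<\dots<\blambda\cdot\bq_N\), and uses the \emph{nested} \(C\)-upper sets \(E_i=\{\blambda\cdot Q>s_i\}\). This gives \(Q=\bq_1+\sum_i(\bq_{i+1}-\bq_i)\one_{E_i}\), where the increments may lie outside \(C\) but the graph functions are ordered, \(f_1\le\dots\le f_{N-1}\). After Lemma~\ref{lem:epigraph-integration}, the integrand at \(\bz\) is \(\sum_i\psi(f_i(\bz),\bz)(\bq_{i+1}-\bq_i)\otimes(1,-\nabla f_i(\bz))\). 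At almost every \(\bz\) all \(f_i\) are differentiable. Indices with a common value \(f_r(\bz)=\dots=f_s(\bz)=\tau\) then have equal gradients, because ordered functions touching at a point of differentiability have the same gradient there. Their terms therefore merge into \(\psi(\tau,\bz)(\bq_{s+1}-\bq_r)\otimes(1,-\nabla f_r(\bz))\). Moreover \(\bq_{s+1}-\bq_r=Q(t_+,\bz)-Q(t_-,\bz)\in C\) for \(t_\pm\) chosen just below and above \(\tau\), by monotonicity in the direction \(\be\). Hence the integrand is \(\RC\)-valued almost everywhere, and Lemma~\ref{lem:cone-integral} gives \(T_Q(\psi)\in\RC\). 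The nestedness of the \(E_i\) is what makes this telescoping work; your non-nested sets \(\{Q\ge_C\bq_j\}\) do not provide it directly.
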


\begin{proof}
Let the distinct values of \(Q\) be \(\bq_1,\ldots,\bq_N\).  Choose \(\blambda\in\Int C\) outside the finitely many hyperplanes
\[
        \{\bmu\in\R^3:\bmu\cdot(\bq_i-\bq_j)=0\},
        \qquad i<j.
\]
Then the real numbers \(\blambda\cdot \bq_j\) are pairwise distinct.  Since \(C\) is self-dual, \(\blambda\in C^*\).  Reorder the values so that
\[
        \blambda\cdot \bq_1<\blambda\cdot \bq_2<\cdots<\blambda\cdot \bq_N.
\]
Choose numbers \(s_i\) with
\[
        \blambda\cdot \bq_i<s_i<\blambda\cdot \bq_{i+1}
        \qquad(1\le i<N),
\]
and define
\[
        E_i=\{\bx\in\Omega:\blambda\cdot Q(\bx)>s_i\}.
\]
Then \[
% E_N \subseteq 
E_{N-1} \subseteq \dots \subseteq E_2\subseteq E_1.\]
If \(\bx\in E_i\), \(\by\in\Omega\), and \(\bx\leq_C \by\), then \(Q(\by)-Q(\bx)\in C\), because \(Q\) is \(C\)-monotone.  Since \(\blambda\in C^*\),
\[
        \blambda\cdot Q(\by)\ge \blambda\cdot Q(\bx)>s_i.
\]
Thus \(\by\in E_i\).  Hence every \(E_i\) is \(C\)-upper, and Lemma \ref{lem:upper-epigraph} implies that \(E_i\) is Lebesgue measurable.

The layer decomposition
\[
        Q=\bq_1+\sum_{i=1}^{N-1}(\bq_{i+1}-\bq_i)\one_{E_i}
\]
holds pointwise: if \(Q(\bx)=\bq_j\), then \(\bx\in E_i\) exactly for \(i<j\).  This decomposition shows that \(Q\) is Lebesgue measurable.  Indeed, each \(E_i\) is Lebesgue measurable, and hence each indicator function \(\one_{E_i}\) is measurable.  Since \(Q\) is a finite linear combination of such indicator functions with vector coefficients, \(Q\) is measurable.  Moreover, \(Q\) is finite-image, so
\[
        \|Q(\bx)\|\le \max_{1\le i\le N}\|\bq_i\|<\infty.
\]
Since \(\nabla\psi\) is smooth and compactly supported, the matrix-valued function \(Q\otimes\nabla\psi\) is integrable.  Therefore \(T_Q(\psi)\) is an ordinary matrix integral.

Let \(f_i=f_{E_i} =\inf(\{t \in (\alpha,\beta): (t,\bz) \in E\} \cup \{\beta\})\) as defined in Lemma~\ref{lem:upper-epigraph}, and put \(G_{f_i}=\{(t,\bz)\in\Omega:t>f_i(\bz)\}\).  Since \(E_{i+1}\subseteq E_i\), we have
\[
        f_1\le f_2\le\cdots\le f_{N-1}.
\]
Using the layer decomposition, we obtain
\[
\begin{aligned}
        T_Q(\psi)
        &=-\int_\Omega \bq_1\otimes\nabla\psi\dd \bx
          -\sum_{i=1}^{N-1}(\bq_{i+1}-\bq_i)\otimes\int_{E_i}\nabla\psi\dd \bx.
\end{aligned}
\]
The first term is zero because \(\bq_1\) is constant and \(\int_\Omega\nabla\psi\dd \bx=0\).  For each \(i\), Lemma \ref{lem:upper-epigraph} gives that \(G_{f_i} \subseteq E_i \), and the difference is null; hence
\[
        \int_{E_i}\nabla\psi\dd \bx
        =\int_{G_{f_i}}\nabla\psi\dd \bx.
\]
Applying Lemma \ref{lem:epigraph-integration} gives
\[
        -\int_{E_i}\nabla\psi\dd \bx
        =\int_U\psi(f_i(\bz),\bz)(1,-\nabla f_i(\bz))\dd \bz.
\]
Therefore
\[
\begin{aligned}
        T_Q(\psi)
        &=\sum_{i=1}^{N-1}
        (\bq_{i+1}-\bq_i)\otimes
        \int_U\psi(f_i(\bz),\bz)(1,-\nabla f_i(\bz))\dd \bz \\
        &=\sum_{i=1}^{N-1}\int_U
        \psi(f_i(\bz),\bz)(\bq_{i+1}-\bq_i)\otimes(1,-\nabla f_i(\bz))\dd \bz.
\end{aligned}
\]
The last equality is just the coordinatewise linearity of the matrix integral.

Fix a point \(\bz\in U\) at which all the \(f_i\) are differentiable.  This holds by Theorem~\ref{thm:rademacher} for almost every \(\bz\), because there are only finitely many Lipschitz functions.  At the fixed point \(\bz\), the numbers
\[
        f_1(\bz),\ldots,f_{N-1}(\bz)
\]
are ordered increasingly.  We group together each maximal consecutive
string of equal values.  Thus a block \(r,\ldots,s\) means that
\[
        f_r(\bz)=f_{r+1}(\bz)=\cdots=f_s(\bz)=\tau,
\]
and that the block cannot be enlarged; equivalently, if \(r>1\) then
\(f_{r-1}(\bz)<\tau\), and if \(s<N-1\) then
\(f_{s+1}(\bz)>\tau\).
Since the functions are ordered, differentiability at a common contact point implies
\[
        \nabla f_r(\bz)=\cdots=\nabla f_s(\bz).
\]
Indeed, if \(g\le h\), \(g(\bz)=h(\bz)\), and both functions are differentiable at \(\bz\), then \(h-g\) has a local minimum at \(\bz\), so \(\nabla h(\bz)=\nabla g(\bz)\).  Applying this to consecutive pairs gives the displayed equality.

The contribution of the block \(r,\ldots,s\) is
\[
        \psi(\tau,\bz)(\bq_{s+1}-\bq_r)\otimes(1,-\nabla f_r(\bz)). \tag{\(\ast\)}
\]
If \(\tau=\alpha\) or \(\tau=\beta\), then \(\psi(\tau,\bz)=0\) by the zero-extension convention and compact support.  
Otherwise choose real numbers \(t_-\) and \(t_+\) such that \(t_-<\tau<t_+\) and along the vertical line over \(\bz\), no other graph \(t=f_i(\bz)\) lies between \(t_-\) and \(t_+\).  Equivalently, with the conventions \(f_0(\bz)=\alpha\) and \(f_N(\bz)=\beta\), choose
\[
        f_{r-1}(\bz)<t_-<\tau<t_+<f_{s+1}(\bz).
\]
Then \((t_-,\bz)\) belongs to precisely the layers \(E_1,\ldots,E_{r-1}\), so \(Q(t_-,\bz)=\bq_r\).  Similarly, \((t_+,\bz)\) belongs to precisely the layers \(E_1,\ldots,E_s\), so \(Q(t_+,\bz)=\bq_{s+1}\).  Since
\[
        (t_+,\bz)-(t_-,\bz)=(t_+-t_-)\be\in C,
\]
\(C\)-monotone gives
\[
        \bq_{s+1}-\bq_r=Q(t_+,\bz)-Q(t_-,\bz)\in C.
\]
Also, since \(f_r\) is \(1\)-Lipschitz, Theorem \ref{thm:rademacher} gives \(|\nabla f_r(\bz)|\le1\) at this point, and hence
\[
        (1,-\nabla f_r(\bz))\in C.
\]
Because \(\psi\ge0\) and each part belongs to $C$, every nonzero block contribution~\((\ast)\) belongs to \(\RC\).  Thus, after grouping the blocks, the integrand is \(\RC\)-valued almost everywhere and is integrable.  By Lemma \ref{lem:cone-integral}, its integral lies in \(\RC\).  This proves \(T_Q(\psi)\in\RC\).
\end{proof}

The matrix forced by the limiting argument will be the identity matrix.
Thus the final obstruction is the following elementary separation fact:
the identity matrix does not belong to the cone \(\RC\).

\begin{proposition}\label{prop:I-not-RC}
The identity matrix \(I_3\) does not belong to \(\RC\).
\end{proposition}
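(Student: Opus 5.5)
The plan is to exhibit a linear functional on \(M_3(\R)\) that is non-negative on every generator \(\bv\otimes\bw\) of \(\RC\), hence on all of \(\RC\), but negative at \(I_3\). The natural candidate is the Minkowski (Lorentz) form: let \(J=\diag(1,-1,-1)\) and define
\[
        L(T)=\tr(JT)=T_{00}-T_{11}-T_{22},\qquad T\in M_3(\R).
\]
This is linear. Closedness of \(\RC\) (Proposition~\ref{lem:RC-closed}) is not needed here, because we only evaluate \(L\) on finite conic combinations, which is exactly how \(\RC\) is defined.

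First I will check the generators. For \(\bv=(v_0,\bv')\) and \(\bw=(w_0,\bw')\) in \(C\), the rank-one matrix \(\bv\otimes\bw\) has entries \(v_iw_j\). Therefore
\[
        L(\bv\otimes\bw)=v_0w_0-\bv'\cdot\bw'\ge v_0w_0-|\bv'||\bw'|\ge0,
\]
using Cauchy--Schwarz together with \(v_0\ge|\bv'|\ge0\) and \(w_0\ge|\bw'|\ge0\). This is the same computation as in Lemma~\ref{lem:self-dual}, applied to \(\bv\) and the reflected vector \(J\bw=(w_0,-\bw')\in C\).

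Next, by linearity, \(L(T)=\sum_j\lambda_jL(\bv_j\otimes\bw_j)\ge0\) for every \(T=\sum_j\lambda_j\bv_j\otimes\bw_j\) with \(\lambda_j\ge0\). Hence \(L\ge0\) on \(\RC\).

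Finally, \(L(I_3)=1-1-1=-1<0\), so \(I_3\notin\RC\). I expect no real obstacle in this argument. The only creative step is choosing the separating functional, and the reflection \(J\), which preserves \(C\) and has trace \(-1\), does this directly.
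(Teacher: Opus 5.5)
Your proposal is correct and is essentially the paper's own proof: the paper uses the same functional \(\Lambda(T)=\tr(JT)\) with \(J=\diag(1,-1,-1)\), checks \(\Lambda(\bv\otimes\bw)=v_0w_0-\bv'\cdot\bw'\ge0\) on generators, and concludes from \(\Lambda(I_3)=-1\). Your remark that closedness of \(\RC\) is not needed also matches the paper, since it too evaluates \(\Lambda\) only on finite conic combinations.
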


\begin{proof}
Let
\[
        J=\diag(1,-1,-1)
\]
and define the linear functional on \(M_3(\R)\)
\[
        \Lambda(T)=\tr(JT).
\]
For \(\bv=(v_0,\bv')\in C\) and \(\bw=(w_0,\bw')\in C\),
\[
        \Lambda(\bv\otimes \bw)
        =\bw^TJ\bv
        =v_0w_0-\bv'\cdot \bw'
        \ge v_0w_0-|\bv'||\bw'|
        \ge0.
\]
Therefore \(\Lambda(T)\ge0\) for every \(T\in\RC\).  But
\[
        \Lambda(I_3)=\tr(J)=1-1-1=-1.
\]
Thus \(I_3\notin\RC\).
\end{proof}

We are now ready to assemble the contradiction to assume that  \(\Disc\) is  an
RB-domain.
%, Lemma~\ref{lem:rb-local} would give finite-image \(C\)-isotone
%maps converging uniformly to the identity on a compact box.  Proposition
%\ref{prop:finite-image-RC} would place the corresponding tested matrices
%in \(\RC\), while the uniform convergence forces those matrices to converge
%to \(I_3\).  Since \(\RC\) is closed, this would imply \(I_3\in\RC\), contrary
%to Proposition~\ref{prop:I-not-RC}.

\begin{theorem}\label{thm:main}
Lawson's planar closed-disk domain \(\Disc\) is not an RB-domain.
\end{theorem}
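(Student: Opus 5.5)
We argue by contradiction and assume that \(\Disc\) is an RB-domain. Fix a compact rectangle \(P=[\alpha',\beta']\times[a_1,b_1]\times[a_2,b_2]\subseteq(-\infty,0)\times\R^2\) with nonempty interior, and let \(\Omega=(\alpha,\beta)\times U\) be its interior, with \(U=(a_1,b_1)\times(a_2,b_2)\). Choose a nonzero non-negative test function \(\psi\in C_c^\infty(\Omega)\), normalized so that \(\int_\Omega\psi\dd\bx=1\).

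By Lemma~\ref{lem:rb-local}, for each \(\varepsilon=1/k\) there is a finite-image \(C\)-monotone map \(Q_k:P\to H\) with \(\sup_{\bx\in P}\|Q_k(\bx)-\bx\|\to0\). Its restriction to \(\Omega\) is still finite-image and \(C\)-monotone. Proposition~\ref{prop:finite-image-RC} then shows that \(Q_k\) is measurable and that
\[
        T_{Q_k}(\psi)=-\int_\Omega Q_k(\bx)\otimes\nabla\psi(\bx)\dd\bx\in\RC .
\]

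Next we pass to the limit. Entrywise we have
\[
        \bigl|(T_{Q_k}(\psi))_{ij}-(T_{\id}(\psi))_{ij}\bigr|\le \sup_{\Omega}\|Q_k-\id\|\int_\Omega|\partial_j\psi|\dd\bx\to0,
\]
so \(T_{Q_k}(\psi)\to T_{\id}(\psi)\), where \(T_{\id}(\psi)_{ij}=-\int_\Omega x_i\,\partial_j\psi\dd\bx\). Since \(\psi\) has compact support in \(\Omega\), integration by parts in the variable \(x_j\) (Fubini together with the one-dimensional fundamental theorem of calculus, as in the proof of Lemma~\ref{lem:epigraph-integration}) gives
\[
        -\int_\Omega x_i\,\partial_j\psi\dd\bx=\int_\Omega \delta_{ij}\psi\dd\bx=\delta_{ij}.
\]
Hence \(T_{\id}(\psi)=I_3\).

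The cone \(\RC\) is closed by Proposition~\ref{lem:RC-closed}, so the limit satisfies \(I_3\in\RC\). This contradicts Proposition~\ref{prop:I-not-RC}, and therefore \(\Disc\) is not an RB-domain.

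With the earlier results available, the argument has no substantive obstacle. The only points that need care are that the maps \(Q_k\) are defined on all of \(\Omega\), which holds because \(\Omega\subseteq P\), and that the convergence \(T_{Q_k}(\psi)\to I_3\) is justified. The latter follows from uniform convergence on \(P\) together with the integrability of \(\nabla\psi\).
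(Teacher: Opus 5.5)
Your proposal is correct and follows the paper's proof essentially step by step. You use Lemma~\ref{lem:rb-local} for the approximants $Q_k$ and Proposition~\ref{prop:finite-image-RC} to place $T_{Q_k}(\psi)$ in $\RC$; uniform convergence and integration by parts then give the limit $I_3$, and closedness of $\RC$ (Proposition~\ref{lem:RC-closed}) contradicts Proposition~\ref{prop:I-not-RC}. The only differences are cosmetic: a general rectangle instead of $[-6,-4]\times[-1,1]^2$, and an entrywise rather than norm estimate for the convergence.
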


\begin{proof}
Assume, towards a contradiction, that \(\Disc\) is an RB-domain.  Choose the compact rectangle, for example,
\[
        P=[-6,-4]\times[-1,1]^2\subset(-\infty,0)\times\R^2
\]
and let
\[
        \Omega=(-6,-4)\times(-1,1)^2.
\]
Take a sequence \(\varepsilon_k\rightarrow 0\).  By Lemma \ref{lem:rb-local}, there are finite-image \(C\)-monotone maps \(Q_k:P\to H\) such that
\[
        \bx-\varepsilon_k \be\le_C Q_k(\bx)\le_C \bx
        \qquad(\bx\in P)
\]
and
\[
        \sup_{\bx\in P}\|Q_k(\bx)-\bx\|\to0.
\]
Choose a non-negative \(\psi\in C_c^\infty(\Omega)\) with
\[
        \int_\Omega\psi(\bx)\dd \bx=1.
\]
Restrict $ T_{Q_k}$ on $\Omega$, by Proposition \ref{prop:finite-image-RC},
\[
        T_{Q_k}(\psi)=-\int_\Omega Q_k(\bx)\otimes\nabla\psi(\bx)\dd \bx\in\RC.
\]
% The uniform Euclidean approximation gives
% \[
% \begin{aligned}
% \left\|T_{Q_k}(\psi)+\int_\Omega \bx\otimes\nabla\psi(\bx)\dd \bx\right\|
% &\le
% \sup_{\bx\in P}\|Q_k(\bx)-\bx\|
% \int_\Omega\|\nabla\psi(\bx)\|\dd \bx.
% \end{aligned}
% \]
Hence
\[
        T_{Q_k}(\psi)
        +
        \int_\Omega \bx\otimes\nabla\psi(\bx)\,d\bx
        =
        \int_\Omega
        (\bx-Q_k(\bx))\otimes\nabla\psi(\bx)\,d\bx .
\]
Therefore, by the triangle inequality for the integral and the elementary estimate \(\|\ba\otimes\bb\|\le \|\ba\|\,\|\bb\|\), we obtain
\[
\begin{aligned}
\left\|
        T_{Q_k}(\psi)
        +
        \int_\Omega \bx\otimes\nabla\psi(\bx)\,d\bx
\right\|
&\le
        \int_\Omega
        \|(\bx-Q_k(\bx))\otimes\nabla\psi(\bx)\|\,d\bx        \\
&\le
        \int_\Omega
        \|\bx-Q_k(\bx)\|\,\|\nabla\psi(\bx)\|\,d\bx             \\
&\le
        \sup_{\by\in P}\|Q_k(\by)-\by\|
        \int_\Omega\|\nabla\psi(\bx)\|\,d\bx .
\end{aligned}
\]
Since \(\psi\in C_c^\infty(\Omega)\), the function \(\nabla\psi\) is
bounded and compactly supported.  Hence
\[
        \int_\Omega\|\nabla\psi(\bx)\|\,d\bx<\infty.
\]
Thus the right-hand side tends to \(0\) as \(k\to\infty\).
 Hence
\[
        T_{Q_k}(\psi)\to -\int_\Omega \bx\otimes\nabla\psi(\bx)\dd \bx.
\]
For the \((i,j)\)-entry of the limit, ordinary integration by parts gives
\[
        -\int_\Omega x_i\partial_j\psi(\bx)\dd \bx
        =\delta_{ij}\int_\Omega\psi(\bx)\dd \bx
        =\delta_{ij}.
\]
Therefore
\[
        -\int_\Omega \bx\otimes\nabla\psi(\bx)\dd \bx=I_3.
\]
Since \(\RC\) is closed by Proposition \ref{lem:RC-closed}, it follows that \(I_3\in\RC\), contradicting Proposition \ref{prop:I-not-RC}.  Therefore \(\Disc\) is not an RB-domain.
\end{proof}

Combining Proposition \ref{prop:jung-fs} and Theorem \ref{thm:main}, we obtain the following main result.

\begin{theorem}\label{cor:fs-not-rb}
The planar closed-disk domain is an FS-domain which is not an RB-domain.
\end{theorem}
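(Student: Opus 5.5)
The statement has two halves, and I will prove them separately and then combine them.

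For the FS half I will simply cite Proposition~\ref{prop:jung-fs}. This is the result of Jung and Abramsky--Jung for the Euclidean disk example, and it is also a special case of Lawson's formal-ball theorem. No new work is needed here beyond checking that the object is the same. The domain in Proposition~\ref{prop:jung-fs} is exactly $\Disc$: the closed disks of $\R^2$ together with $\R^2$ as bottom, ordered by reverse inclusion. That is the domain in the present statement.

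For the non-RB half I will invoke Theorem~\ref{thm:main}. Its argument runs in four steps.
\begin{enumerate}
\item Suppose $\Disc$ were RB. Then Lemma~\ref{lem:rb-local} yields finite-image $C$-monotone maps $Q_k$ on the box $P$ that converge uniformly to the identity.
\item By Proposition~\ref{prop:finite-image-RC}, testing each $Q_k$ against a nonnegative bump $\psi$ with integral $1$ gives matrices $T_{Q_k}(\psi)\in\RC$.
\item Uniform convergence together with integration by parts shows that these matrices converge to $I_3$.
\item Since $\RC$ is closed (Proposition~\ref{lem:RC-closed}), this forces $I_3\in\RC$. That contradicts Proposition~\ref{prop:I-not-RC}.
\end{enumerate}

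Combining the two halves gives the statement.

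The only point that needs care is that both halves refer to the same structure. The RB argument runs entirely inside the Lorentz-cone model $H$. I will therefore recall that the map $\overline B(\bz,r)\mapsto(-r,\bz)$ is an order isomorphism of $\Disc\setminus\{\R^2\}$ onto $(H,\le_C)$. Adding the bottom back recovers $\Disc$ itself. Consequently the FS property of Proposition~\ref{prop:jung-fs} and the failure of RB in Theorem~\ref{thm:main} concern the same dcpo, and no further obstacle remains.
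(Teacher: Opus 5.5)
Your proposal is correct and matches the paper: the paper proves this statement simply by combining Proposition~\ref{prop:jung-fs} (FS) with Theorem~\ref{thm:main} (not RB). Your four-step summary of the non-RB argument follows the paper's proof of Theorem~\ref{thm:main}, and your remark identifying $\Disc$ with $(H,\le_C)$ plus a bottom is the identification the paper itself uses.
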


% \begin{remark}
% The argument developed in this paper is a special case of a more general cone-domain theorem that we have proved and will present in a forthcoming sequel.  Let \(C\) be a closed, convex, pointed and generating cone in a finite-dimensional real vector space \(V\), and let
% \[
%         D_C=(-C)\cup\{\bot\}
% \]
% be the negative cone with a new least element, ordered by the cone order. Keimel~\cite{Keimel2009} claimed that these cone domains are FS-domains and asked whether they are always RB-domains.  Our general result gives the answer:
% \[
%         D_C\text{ is an RB-domain}
%         \quad\Longleftrightarrow\quad
%         C\text{ is simplicial}.
% \]
% The present paper gives the complete proof in the Lorentz-cone case which corresponds to the planar closed-disk domain. 
% \end{remark}

\bibliographystyle{plain}
\bibliography{FSvsRB}

@incollection {AbramskyJung1994,
    AUTHOR = {Abramsky, Samson and Jung, Achim},
     TITLE = {Domain theory},
 BOOKTITLE = {Handbook of logic in computer science, {V}ol.\ 3},
    SERIES = {Handb. Log. Comput. Sci.},
    VOLUME = {3},
     PAGES = {1--168},
 PUBLISHER = {Oxford Univ. Press, New York},
      YEAR = {1994},
      ISBN = {0-19-853762-X},
   MRCLASS = {68Q55 (03B70 06B35)},
  MRNUMBER = {1365749},
MRREVIEWER = {Guo-Qiang\ Zhang},
}

@Book{Gierz2003,
 Author = {Gierz, Gerhard and Hofmann, Karl and Keimel, Klaus and Lawson, Jimmie and Mislove, Michael and Scott, Dana S.},
 Title = {Continuous lattices and domains},
 FSeries = {Encyclopedia of Mathematics and Its Applications},
 Series = {Encycl. Math. Appl.},
 ISSN = {0953-4806},
 Volume = {93},
 ISBN = {0-521-80338-1},
 Year = {2003},
 Publisher = {Cambridge: Cambridge University Press},
 Language = {English},
 DOI = {10.1017/CBO9780511542725},
 zbMATH = {1849874},
 Zbl = {1088.06001}
}

@Book{Goubault2013,
 Author = {Goubault-Larrecq, Jean},
 Title = {Non-{Hausdorff} topology and domain theory. {Selected} topics in point-set topology},
 FSeries = {New Mathematical Monographs},
 Series = {New Math. Monogr.},
 Volume = {22},
 ISBN = {978-1-107-03413-6; 978-1-139-52443-8},
 Year = {2013},
 Publisher = {Cambridge: Cambridge University Press},
 Language = {English},
 DOI = {10.1017/CBO9781139524438},
 zbMATH = {6154006},
 Zbl = {1280.54002}
}

@inproceedings{Jung1990,
  title={The classification of continuous domains},
  author={Jung, Achim},
  booktitle={[1990] Proceedings. Fifth Annual IEEE Symposium on Logic in Computer Science},
  pages={35--40},
  year={1990},
  organization={IEEE}
}

@article{JungTix1998,
  title={The troublesome probabilistic powerdomain},
  author={Jung, Achim and Tix, Regina},
  journal={Electronic Notes in Theoretical Computer Science},
  volume={13},
  pages={70--91},
  year={1998},
  publisher={Elsevier}
}

@book{BoydVandenberghe,
  title={Convex optimization},
  author={Boyd, Stephen and Vandenberghe, Lieven},
  year={2004},
  publisher={Cambridge University press}
}

@book{EvansGariepy2015,
  title={Measure theory and fine properties of functions},
  author={Evans, Lawrence C},
  year={2025},
  publisher={Chapman and Hall/CRC}
}

@book{Folland1999,
  title={Real analysis: modern techniques and their applications},
  author={Folland, Gerald B},
  year={1999},
  publisher={John Wiley \& Sons}
}

@article{Keimel2009,
  title={Bicontinuous domains and some old problems in domain theory},
  author={Keimel, Klaus},
  journal={Electronic Notes in Theoretical Computer Science},
  volume={257},
  pages={35--54},
  year={2009},
  publisher={Elsevier}
}

@book{Jung1989,
  title={Cartesian closed categories of domains},
  author={Jung, Achim},
  volume={66},
  year={1989},
  publisher={Centrum voor wiskunde en informatica Amsterdam}
}

@article{lawson2008,
  title={Metric spaces and {FS}-domains},
  author={Lawson, Jimmie D},
  journal={Theoretical computer science},
  volume={405},
  number={1-2},
  pages={73},
  year={2008}
}

@article{LiangKeimel1999,
  title={Compact continuous {L}-domains},
  author={Liang, Jihua and Keimel, K},
  journal={Computers $\&$ Mathematics with Applications},
  volume={38},
  number={1},
  pages={81--89},
  year={1999},
  publisher={Elsevier}
}

@article{Plotkin1981,
  title={Post-graduate lecture notes in advanced domain theory (incorporating the "Pisa Notes")},
  author={Plotkin, Gordon D},
  journal={Dept. of Computer Science, Univ. of Edinburgh},
  year={1981}
}

@article{Rockafellar1970,
  title={Convex analysis},
  author={Tyrrell Rockafellar, R},
  journal={Princeton mathematical series},
  volume={28},
  year={1970},
  publisher={Princeton university press Princeton, NJ, USA}
}

@article{Scott1993,
  title={A type-theoretical alternative to ISWIM, CUCH, OWHY},
  author={Scott, Dana S},
  journal={Theoretical computer science},
  volume={121},
  number={1-2},
  pages={411--440},
  year={1993},
  publisher={Elsevier}
}

@article{Scott1976,
  title={Data types as lattices},
  author={Scott, Dana},
  journal={SIAM Journal on computing},
  volume={5},
  number={3},
  pages={522--587},
  year={1976},
  publisher={SIAM}
}

@inproceedings{Scott1982,
  title={Domains for denotational semantics},
  author={Scott, Dana S},
  booktitle={International Colloquium on Automata, Languages, and Programming},
  pages={577--610},
  year={1982},
  organization={Springer}
}

@article{Smyth1983,
  title={The largest cartesian closed category of domains},
  author={Smyth, Michael B},
  journal={Theoretical Computer Science},
  volume={27},
  number={1-2},
  pages={109--119},
  year={1983},
  publisher={Elsevier}
}

@article{ZouLiGuo2018,
  title={A note on the problem when {FS}-domains coincide with {RB}-domains},
  author={Zou, Zhiwei and Li, Qingguo and Guo, Lankun},
  journal={Categories and General Algebraic Structures with Applications},
  volume={8},
  number={1},
  pages={51--59},
  year={2018},
  publisher={Categories and General Algebraic Structures with Applications}
}

\end{document}